\newtheorem{theorem}{Theorem}[section]
\newtheorem{lemma}[theorem]{Lemma}
\newtheorem{corollary}[theorem]{Corollary}
\newtheorem{proposition}[theorem]{Proposition}
\theoremstyle{definition}
\newtheorem{remark}[theorem]{Remark}
\newcommand{\ctM}{\Theta_{M}}
\newcommand{\cts}{\Theta_{S_{\delta}}}
\newcommand{\cs}{S_{\delta}}
\newcommand{\PP}{\mathbb{P}}
\newcommand{\CC}{\mathbb{C}}
\newcommand{\ZZ}{\mathbb{Z}}
\newcommand{\mcL}{\mathcal{L}}
\newcommand{\mcO}{\mathcal{O}}
\newcommand{\mcH}{\mathcal{H}}
\newcommand{\mcK}{\mathcal{K}}
\newcommand{\mcE}{\mathcal{E}}
\newcommand{\mcF}{\mathcal{F}}
\begin{document}

 \begin{center}


 \end{center}
\title {Foliations with isolated singularities on Hirzebruch surfaces}
\author{C. Galindo}
\address{Institut Universitari de Matemàtiques i Aplicacions de Castelló (IMAC) and
         Departament de Matemàtiques, Universitat Jaume I, Edifici TI (ESTCE),
         Av. de Vicent Sos Baynat, s/n, Campus del Riu Sec, 12071 Castelló de la Plana, Spain}
\email{galindo@uji.es}
\thanks{2020 \textit{Mathematics Subject Classification}.
        Primary 32S65; Secondary 32L10.}
\author{F. Monserrat}
\address{Instituto Universitario de Matemática Pura y Aplicada,
         Universidad Politécnica de Valencia, Edificio 8E, acceso F, 4a Planta,
         Camino de Vera, s/n, 46022 Valencia, Spain}
\email{framonde@mat.upv.es}
\author{J. Olivares}
\address{Centro de Investigaci\'on en Matem\'aticas, A.C.
A.P. 402, Guanajuato 36000, Mexico.} \email{olivares@cimat.mx}
\thanks{The first two authors are partially supported by the Spanish Government
MICINN/FEDER/AEI/UE, grants  PGC2018-096446-B-C22 and  RED2018-102583-T, as well
as by Generalitat Valenciana, grant AICO-2019-223 and Universitat Jaume I, grant UJI-2018-10.
The third author was partially supported by CONACYT: Estancias Sabáticas Vinculadas a la
Consolidación de Grupos de Investigación, CVU 10069}

\begin{abstract}
We study foliations $\mathcal{F}$ on Hirzebruch surfaces $S_\delta$ and prove that, similarly
to those on the projective plane, any $\mathcal{F}$ can be represented by a bi-homogeneous
polynomial affine $1$-form. In case $\mathcal{F}$ has isolated singularities,
we show that, for $ \delta=1 $, the singular scheme of $ \mcF $
does determine the foliation, with some exceptions that we describe, as is the case of
foliations in the projective plane. For $ \delta \neq 1 $, we prove that
the singular scheme of $\mathcal{F}$  does not determine the foliation. However
we prove that, in most cases, two foliations $\mathcal{F}$
and $\mathcal{F}'$ given by sections $s$ and $s'$ have the same singular scheme if and only
if $ s' = \Phi( s )$, for some global endomorphism $ \Phi $ of the tangent bundle of $S_\delta$.
\end{abstract}
\maketitle
\section{Introduction}\label{La1}
The study of complex planar polynomial differential systems goes back to the XIX century.
Articles by Autonne \cite{aut}, Darboux \cite{dar}, Painlev\'e \cite{pai} and Poincar\'e
\cite{poi1, poi2} can be considered as seminal references for this topic. Problems proposed
more than a century ago, as to obtain conditions for the existence of first integrals for the
above mentioned systems, are still pending for resolution. Considering holomorphic foliations
by curves with singularities (\textit{foliations} in the sequel) on the complex projective
plane have produced important advances in the knowledge of those systems \cite{ce-li,car,ca-ca,zam1,pere,l-n,es-kl,g-m-1,g-m-2,g-m-3,FGM}. Foliations can be defined
on another varieties extending the problems from the projective plane to those varieties
\cite{soa1, jou2,soa2, zam2, c-l, Correa}. Focusing on foliations on surfaces, Hirzebruch surfaces
$S_{\delta}$ with $\delta \neq 1$ (see Section \ref{La2} for our notation)
constitute jointly with the projective plane the
classical minimal rational surfaces, and the study of foliations on them is the first single step
after that on the projective plane. Our aim is to deepen the study of foliations
on Hirzebruch surfaces which have been treated within more general situations: as foliations
on ruled surfaces (in the profound monograph \cite{Gomez-Mont}) or as foliations
on toric varieties \cite{Correa}.

Let $ M $ be a compact connected complex manifold. Recall that a
foliation $\mathcal F$ on $ M $ 
may be defined by
non-identically zero holomorphic vector fields $ X_i $ defined on a covering $ \{ V_i \} $ of $ M $
such that in each overlapping set $ V_i \cap V_j $ we have
\begin{equation}\label{cocicle}
   X_i = \xi_{ij} X_j,
\end{equation}
where $ \xi_{ij} $ is a never vanishing holomorphic function. If
$ L^* $ denotes the holomorphic line bundle constructed with the cocycle $( \xi_{ij} )$,
and $\mcL^*$ its corresponding invertible sheaf,
then the $ X_i $'s give rise to a global section
$ s \in H^{0}(  M , \ctM \otimes \mcL^{*}) $ or to a global section in
$ H^{0}( M, \mathrm{Hom}_{ \mcO_M} (\mcL , \ctM) )$, where
$ \ctM $ is the tangent sheaf of $ M $ and $\mcL$ is the dual
of $\mcL^{*}$.
 Two global sections (in the corresponding spaces) define the same
foliation if and only if one is a non-zero scalar multiple of the other.

Following a somehow standard use (see  \cite{Brunella}, for instance) 
$ L^* $ will be called the \emph{cotangent bundle of} $\mathcal F$ and its dual 
$ L$, \emph{its tangent bundle}. Hence, the space $\mathrm{Fol}(\mcL, M) $ of foliations
$ \mcF $ with tangent bundle $ L $ (or tangent sheaf $ \mcL $)
 is $ \PP H^{0}( M, \mathrm{Hom}_{ \mcO_M} (\mcL , \ctM) )$.
Such an $ \mcF $ corresponds to a foliation with cotangent bundle $ L^*$ (or cotangent sheaf $ \mcL^* $)
by regarding it as the class $ [ s ] \in \PP H^{0}( M, \ctM \otimes \mcL^{*}) $ 
of a global section $ s \in H^{0}( M, \ctM \otimes \mcL^{*}) $.


Given a global section
$ s \in H^{0}( M, \mathrm{Hom}_{ \mcO_M} (\mcL , \ctM) )$,
the scheme $ Z = Z_s $ of those points 
$p\in M $ where the induced morphism
$\mcL_p\rightarrow \Theta_{M,p}$ becomes zero will be referred to as
the \emph{singular scheme} of $ s $: its sheaf of ideals $ I_Z \subset \mcO_M $
is the sheaf obtained by gluing the ideals $ (a_i, b_i) \subset \mcO( V_i ) $,
where $ a_i $ and $ b_i $ are the coefficients of the vector field $ X_i $ that defines
$ s $ on the open set $ V_i $, as described nearby \eqref{cocicle}. The singular scheme
of $ \mcF = [s] $ is the singular scheme of any section in $ [s] $.

We say that $ \mcF = [s] $ \emph{has isolated singularities} if
$ \text{dim } Z_s = 0 $.

In a series of papers \cite{Cam-Oli-1, Cam-Oli, Cam-Oli-2}, with the precedent of \cite{GM-2},
Campillo and the third author have proved that given a foliation
$ [s] \in \PP H^0( \PP^n, \mathrm{Hom}( \mcO_{\PP^n}( -d ), \Theta_{\PP^n} ) )
 = \mathrm{Fol}( \mcO_{\PP^n}( -d ), \PP^n ) $
 with isolated singularities and $ d > 1 $,
 $ [s] $ is the unique foliation in $\mathrm{Fol} ( \mcO_{\PP^n}( -d ), \PP^n ) $
 with singular scheme $ Z = Z_s $. We summarize this statement saying that a foliation with isolated
 singularities of degree $ d > 1 $ in a projective space of dimension $ n \geq 2 $ is uniquely
 determined by its singular scheme.

In this paper we study the extension of this result to foliations with isolated singularities 
on Hirzebruch surfaces $ S_{\delta} $, with $ \delta \geq 0 $.

Other results of this type,
dealing with foliations (or distributions)
of rank different from $1$ in projective spaces, are given in
\cite{A-C}, \cite{C-F-N-V} and in \cite{G-P}.

To state our results, we recall from Section \ref{La2}  below that every invertible sheaf
$ \mcL $ on $ \cs $ has the form $\mcO_{\cs} (-d_1,-d_2) $, for some $ d_1, d_2 \in \mathbb{Z}$. Hence,
foliations on $ \cs $ come from sections $ s \in H^0( \cs, \mathrm{Hom} (\mcO_{\cs} (-d_1,-d_2), \cts ) ) $.

We work within the toric structure of $ \cs $. This point of view gives us a way to represent
every section $ s $ by a bi-homogeneous polynomial affine $1$-form $ \Omega $ on
$(\mathbb{C}^2\setminus \{0\})\times (\mathbb{C}^2\setminus \{0\})$ --an affine $1$-form, for
short-- in essentially the same way as a projective $1$-form (say) in $ \PP^2 $ is representable by a
polynomial homogeneous $1$-form in
$(\mathbb{C}^3\setminus \{0\}) $ (see Proposition \ref{forms}).
This representation is one of our main tools. On the one hand, because it allows us to prove that if
$ s $ has isolated singularities, then $ d_1 \geq 0 $ and $ d_2 \geq 0 $, if $ \delta = 0 $ and
$ d_1 \geq -1 $ and $ d_2 \geq 0 $, if $ \delta \geq 1 $
(see Proposition \ref{isolated}, which is a refinement of \cite[Proposition 2.2]{Gomez-Mont}). On the
other hand, because the coefficients of $ \Omega $ generate the ideal of the singular scheme $ Z $
of $ s $ (see Remark \ref{ShfofIds}).

Global endomorphisms $ \Phi $ of $ T\cs $ play a central role. To start, Corollary \ref{manyfols} shows that,
for $ \delta \neq 1 $ and $ \Phi $ invertible, all foliations $ [ \Phi( s ) ] $ have the same singular
scheme as $ [s] $ does and most of them are different from $ [s] $;
therefore in this case, the singular scheme
does not determine the foliation on the contrary to what happens in the projective case.
However, we prove in Theorem \ref{mainT} that for $ d_2 \geq 1 $
and $d_1 \geq 1$ (in case $\delta=0$), and $d_1 \geq 2$ (in case $\delta \geq 2$),
these foliations $ [ \Phi( s ) ] $ are the unique ones that
share singular scheme with $ [s] $. We prove moreover that, in case $ \delta = 1 $, the foliation $ [s] $
is uniquely determined by its singular scheme if $ d_2 \geq 1 $ and $ d_1 \geq 0 $.

Some preliminaries on Hirzebruch surfaces that will be used along the paper are given in Section \ref{La2}.
In Section \ref{La3}
we give the aforementioned representation of the sections $ s \in H^{0}( S_{\delta}, \cts \otimes \mcL^{*}) $
in terms of polynomial bi-homogeneous affine $1$-forms, as well as one in terms of
vector fields. Theorem \ref{endtang} on the structure of the global endomorphisms of the tangent bundle
of Hirzebruch surfaces is the main content of Section \ref{La4} and we see it as one of our
leading results. Theorem \ref{mainT}, the main result of the paper, is proved in Section \ref{La5}.
\subsection{Notation} Throughout the paper, the structure sheaf $ \mcO_{\cs} $ of $ \cs $ will be denoted by
$ \mcO $. The sheaves of sections of the tangent $ T\cs $ and cotangent $ T^*\cs $ bundles will be denoted
respectively by $ \Theta_{\cs} $ and $ \Omega_{\cs}^1 $. For a line bundle $ L $
its associated invertible sheaf will be denoted by $ \mcO( L ) $. If it has the form
$ \mcL = \mcO(d_1, d_2) $, then the sheaves of sections of the bundles $ T\cs \otimes L $ and
$ T^*\cs \otimes L $ will be denoted respectively by $ \Theta_{\cs}(d_1, d_2) $ and $ \Omega_{\cs}^1(d_1, d_2) $.

\section{Preliminaries on Hirzebruch surfaces}\label{La2}
For an integer $\delta \geq 0 $, the
Hirzebruch surface $S_{\delta}$ is the ruled surface
\begin{equation}\label{ruled}
  \psi: S_{\delta}\rightarrow \mathbb{P}^1
\end{equation}
associated to
$ \mathbb{P}(\mcO_{\mathbb{P}^1}\oplus \mcO_{\mathbb{P}^1}(-\delta)) $, where
$ \mathbb{P}^1 $ is the complex projective line
\cite[Chapter V, Corollary 2.13]{Hartshorne}.

The surjective map $ \psi $
gives $S_{\delta}$ the structure of a $\mathbb{P}^1$-bundle over $\mathbb{P}^1$ and its
fibres constitute \emph{the ruling} of $S_{\delta}$.

 Let $F$ and $M$ be two generators of the
 divisor class group 
 ${\rm Cl}(S_{\delta})$ such that
 $F^2=0$, $M^2=\delta$ and $F\cdot M=1$. If $\delta>0$, let
 $M_0$ denote the class of the $(-\delta)$-curve of $S_{\delta}$,  that is,
 the unique irreducible curve of $S_{\delta}$ with negative self-intersection
 (if $\delta=0$ , take $ M $ for $ M_0 $). For simplicity, for each $E\in {\rm Cl}(S_{\delta})$,
 $E$ will also denote the image of $E$ in ${\rm Cl}(S_{\delta})\otimes \mathbb{Q}$. For
 every $ d_1, d_2\in \mathbb{Z} $, the invertible sheaf $\mcO(d_1, d_2)$ corresponds to the class
 $ d_1 F + d_2 M $.

The \emph{cone of curves} $NE(S_{\delta})$ of $S_{\delta}$ is the convex cone of
${\rm Cl}(S_{\delta})\otimes \mathbb{Q}$
generated by the images of the effective classes.
Its dual cone  $NE(S_{\delta})^\vee$ (with respect to the intersection form) is called the
\emph{nef cone} and is denoted by $P(S_{\delta})$. Specifically:
\[
 P(S_{\delta}):=NE(S_{\delta})^\vee=\{E\in {\rm Cl}(S_{\delta})\otimes \mathbb{Q}\mid E\cdot C\geq 0\;
 \mbox{for any effective divisor $C$ on $S_{\delta}$}\}.
\]
The \emph{ample cone}
Amp$(S_{\delta})$
of $S_{\delta}$ is the convex cone of
${\rm Cl}(S_{\delta})\otimes \mathbb{Q}$
whose elements are the ample classes. These classes are described in the following proposition
(whose proof can be deduced from \cite[Chapter V, Corollary 2.18]{Hartshorne}):
\begin{proposition}\label{ample}
A class $ d_1 F + d_2 M $ of ${\rm Cl}(S_{\delta})$ is ample if and only if $d_1, d_2 > 0 $.
\end{proposition}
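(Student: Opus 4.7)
The plan is to apply Kleiman's criterion for ampleness on the smooth projective surface $\cs$. It is classical that the Mori cone $NE(\cs)$ is two-dimensional, spanned by the fiber class $F$ together with the minimal section class $M_0$ (interpreting $M_0 = M$ when $\delta = 0$, so that in that case the two generators are $F$ and $M$ with $F \cdot M = 1$ and $F^2 = M^2 = 0$). Hence a class $D$ is ample if and only if it has strictly positive intersection with each of these two extremal generators.

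Next, using the relations $F^2 = 0$, $M^2 = \delta$, $F \cdot M = 1$ together with $M_0 = M - \delta F$ for $\delta \geq 1$ (so that $M_0^2 = -\delta$ and $M_0 \cdot F = 1$), I would compute directly
\[
(d_1 F + d_2 M) \cdot F = d_2, \qquad (d_1 F + d_2 M) \cdot M_0 = d_1.
\]
Kleiman's criterion then yields the stated equivalence: $d_1 F + d_2 M$ is ample iff $d_1 > 0$ and $d_2 > 0$. As a consistency check, in this range one has $D^2 = 2 d_1 d_2 + \delta d_2^2 > 0$.

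Alternatively, one may translate directly from Hartshorne V.2.18, which asserts that on a geometrically ruled surface of invariant $e \geq 0$ a class $a C_0 + b f$ is ample iff $a > 0$ and $b > a e$. With the dictionary $f = F$, $C_0 = M_0$, $e = \delta$, one writes $d_1 F + d_2 M = d_2 M_0 + (d_1 + d_2 \delta) F$ and reads off the conditions $d_2 > 0$ and $d_1 + d_2 \delta > d_2 \delta$, that is, $d_1, d_2 > 0$. The only delicate point is identifying the extremal rays of $NE(\cs)$, in particular handling the case $\delta = 0$ where both generators have zero self-intersection so one cannot single out $M_0$ as "the negative curve"; once this is in place the argument is essentially a one-line intersection computation.
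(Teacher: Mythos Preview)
Your second argument---translating Hartshorne V, Corollary 2.18 via the dictionary $C_0 = M_0$, $f = F$, $e = \delta$ and writing $d_1 F + d_2 M = d_2 M_0 + (d_1 + d_2\delta)F$---is exactly the route the paper takes (the paper simply cites that corollary), and your computation is correct.

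Your first argument via Kleiman's criterion is also sound, but note a small ordering issue relative to the paper: in this paper the description of $NE(\cs)$ as the cone spanned by $F$ and $M_0$ is \emph{deduced from} Proposition~\ref{ample} (via $\mathrm{Amp}(\cs)^{\mathrm{int}} = P(\cs)$ and duality), so invoking those extremal rays to prove the proposition would be circular within the present exposition. Of course the extremal rays of a Hirzebruch surface can be identified independently (for $\delta \geq 1$ the curve $M_0$ is the unique irreducible curve of negative self-intersection, and $F$ is a fiber; for $\delta = 0$ the two rulings work), so as a free-standing proof your Kleiman argument is fine---it just reverses the logical order chosen here.
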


Since Amp$(S_{\delta})$ is the topological interior of $P(S_{\delta})$ (see \cite{Kleiman}) it
follows from Proposition \ref{ample} that $P(S_{\delta})$ is the convex cone spanned by $F$ and $M$.
Moreover, the topological closure of $NE(S_{\delta})$ is equal to $P(S_{\delta})^\vee$ and, therefore,
it is the convex cone spanned by $F$ and $M_0$; since both generators are effective, one has that
$NE(S_{\delta})$ is closed and it is spanned by the classes $F$ and $M_0$.  From these facts, the following
result is clear.

\begin{proposition}\label{effective}
 A class $ d_1 F + d_2 M $ in ${\rm Cl}(S_{\delta})$ is effective if and only if
 $d_1+\delta d_2 \geq 0$ and $d_2 \geq 0$.
\end{proposition}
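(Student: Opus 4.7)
The plan is to leverage the preceding discussion, which establishes that $NE(S_\delta)$ is a closed convex cone in ${\rm Cl}(S_{\delta})\otimes\mathbb{Q}$ generated by the classes $F$ and $M_0$. Since a class in ${\rm Cl}(S_{\delta})$ is effective if and only if it is represented by an effective divisor, and since $F$ and $M_0$ are themselves effective, the proposition reduces to a linear algebra computation: determine precisely which integer classes $d_1 F + d_2 M$ lie in the closed cone $\mathbb{R}_{\geq 0}\,F + \mathbb{R}_{\geq 0}\,M_0$, and check that the decomposition produces nonnegative integer coefficients (so that the class is actually represented by an effective combination of the curves $F$ and $M_0$, not merely lies in their rational cone).

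First I would express $M_0$ in the basis $\{F, M\}$ of ${\rm Cl}(S_\delta)\otimes\mathbb{Q}$. Writing $M_0 = aF + bM$ and using the intersection data $F^2=0$, $M^2=\delta$, $F\cdot M=1$ together with the known values $M_0^2=-\delta$ and $M_0 \cdot F = 1$ (the $(-\delta)$-section meets each fibre once), one solves $b=1$ and $a=-\delta$, giving $M_0 = M - \delta F$ (for $\delta=0$ this is trivially consistent with the convention $M_0=M$).

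Next I would solve $d_1 F + d_2 M = \alpha F + \beta M_0 = (\alpha - \delta\beta)F + \beta M$ for $\alpha,\beta \geq 0$. This yields $\beta = d_2$ and $\alpha = d_1 + \delta d_2$, so membership in the cone is equivalent to $d_2 \geq 0$ and $d_1+\delta d_2 \geq 0$. The key (and only) subtle point is that, since $d_1,d_2\in\mathbb{Z}$ and $\delta\in\mathbb{Z}$, the coefficients $\alpha,\beta$ obtained are automatically nonnegative integers; hence when the two inequalities hold, $d_1 F + d_2 M$ is literally the class of the effective divisor $\alpha F + \beta M_0$, and when they fail the class lies outside $NE(S_\delta)$ and cannot be effective.

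There is no real obstacle here beyond bookkeeping: the closedness of $NE(S_\delta)$ and the identification of its extremal rays have already been used in the paragraph preceding the statement, so the proposition follows as an immediate consequence by a change of basis between $\{F,M\}$ and $\{F,M_0\}$.
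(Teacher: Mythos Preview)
Your argument is correct and is exactly the approach the paper intends: the preceding paragraph establishes that $NE(S_\delta)$ is the closed cone spanned by $F$ and $M_0$, and the proposition is then declared ``clear'' from this; you have simply written out the change-of-basis computation (using $M_0 = M - \delta F$) that makes this explicit.
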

The Hirzebruch surface $S_{\delta}$  also has the structure of a toric variety,
that is, it can be regarded as the quotient of
$(\mathbb{C}^2\setminus \{0\})\times (\mathbb{C}^2\setminus \{0\})$ by an action of the algebraic torus
$(\mathbb{C}\setminus \{0\})\times (\mathbb{C}\setminus \{0\})$. Indeed, considering coordinates
$(X_0,X_1,Y_0,Y_1)$ in $(\mathbb{C}^2\setminus \{0\})\times (\mathbb{C}^2\setminus \{0\})$, the action is
given by
\[
(\lambda,\mu)\cdot (X_0,X_1,Y_0,Y_1):=(\lambda X_0,\lambda X_1,\mu Y_0,\lambda^{-\delta}\mu Y_1),
\]
for all $(\lambda,\mu)\in (\mathbb{C}\setminus \{0\})\times (\mathbb{C}\setminus \{0\})$ (see \cite{Correa},
where $ S_{\delta} $ appears as $ \mathbb{F}(0, \delta) $).
 Thus, we have a natural quotient map
 \begin{equation}\label{quotmap}
    \pi: (\mathbb{C}^2\setminus \{0\})\times (\mathbb{C}^2\setminus \{0\})\rightarrow S_{\delta}.
 \end{equation}
For integers $d_1$ and $d_2$,
a polynomial $H(X_0,X_1,Y_0,Y_1)\in \mathbb{C}[X_0,X_1,Y_0,Y_1]$ is said to be
\emph{bi-homogeneous of bi-degree} $(d_1,d_2)$ if
every monomial $X_0^\alpha X_1^\beta Y_0^\gamma Y_1^\mu$ appearing in $H$ with non-zero coefficient satisfies
that $\alpha+\beta-\delta\mu=d_1$ and $\gamma+\mu=d_2$. For any effective divisor $ d_1 F + d_2 M $ in $S_{\delta}$,
the non-zero global sections of $\mcO(d_1,d_2)$ correspond to bi-homogeneous polynomials of bi-degree
$(d_1,d_2)$.

Given a line bundle $ L $ on $S_{\delta}$, the Chern class
$ c( L ) = a f + b h \in H^2( S_{\delta}, \mathbb{Z} ) $ (considered below
\cite[Definition 1.1]{Gomez-Mont}) is expressed in bi-degree
form by setting $ e = \delta, f = F $ and $ f^{\prime} = M $ (so that $ [B_0]^* = M_0 $).
Then, it holds that
\begin{equation}\label{CoC}
 \left(
  \begin{matrix}
    d_1  \\
    d_2
  \end{matrix} \right) =
 \left(
  \begin{matrix}
    1 & -\delta/2  \\
    0 & 1
  \end{matrix} \right)
 \left(
  \begin{matrix}
      a  \\
      b
  \end{matrix} \right).
\end{equation}
For instance, the Chern class $ c( K_{\cs} ) = (2g-2) f -2 h = -2f -2h $ of the canonical
bundle $ K_{\cs} $ \cite[Lemma 1.3]{Gomez-Mont} corresponds to the
\textit{canonical sheaf} $ \mcK_{\cs} = \mcO(\delta-2,-2) $.

The surface $S_{\delta}$ is covered by the four affine open sets $U_{ij}$, $i,j\in \{0,1\}$, given by
\begin{equation}\label{affcov}
 U_{ij}:=\{\pi(X_0,X_1,Y_0,Y_1)\in S_{\delta}\mid X_i\neq 0 \mbox{ and } Y_j\neq 0\},
\end{equation}
where $ \pi $ is the quotient map \eqref{quotmap}.
Since $ \pi(X_0,X_1,Y_0,Y_1) = \pi(1,X_1/X_0,1,X_0^{\delta}Y_1/Y_0) $ in $ U_{00}$, the open set
$U_{00}$ is identified with $\mathbb{C}^2$ by means of the isomorphism:
\[
 \pi(X_0,X_1,Y_0,Y_1)\mapsto (x_{00},y_{00}),
\]
where $x_{00}:=X_1/X_0$ and $y_{00}:=X_0^{\delta}Y_1/Y_0$. Similarly $U_{10}$ is identified with
$\mathbb{C}^2$ by means of the isomorphism $\pi(X_0,X_1,Y_0,Y_1)\mapsto (x_{10},y_{10})$, where
$x_{10}:=X_0/X_1$ and $y_{10}:=X_1^{\delta}Y_1/Y_0$. The change of coordinates map in the overlap
of $U_{00}$ and $U_{10}$ is given by
\[
\varphi_{00}^{10}:U_{00}\cap U_{10}\subseteq U_{00}\rightarrow U_{00}\cap U_{10}\subseteq U_{10},\;\; (x_{00},y_{00})\mapsto (1/x_{00},x_{00}^\delta y_{00})=(x_{10}, y_{10}).
\]
If $C$ is the curve on $S_{\delta}$ defined by the zero
locus of a bi-homogeneous polynomial $H(X_0,X_1,Y_0,Y_1)$ then
the intersection $ C\cap U_{00} $ is the zero locus of the polynomial
in the affine coordinates $x_{00}$ and $y_{00}$ given by
\begin{equation}\label{restriction}
  \tilde{H}^{00}(x_{00},y_{00}):=H(1,x_{00},1,y_{00}).
\end{equation}
Analogously, for each $i,j\in \{0,1\}$, we can obtain affine coordinates
$(x_{ij},y_{ij})$ for every affine open set $U_{ij}$, change of coordinates maps
\begin{equation}\label{chofcoords}
  \varphi_{ij}^{i'j'}:U_{ij}\cap U_{i'j'}\subseteq U_{ij}\rightarrow U_{ij}\cap U_{i'j'}\subseteq U_{i'j'},
\end{equation}
and an equation $\tilde{H}^{ij}=0$ for the intersection of $C$ with $U_{ij}$.
\section{Representation of foliations on $ S_{\delta} $ by affine vector fields and $1$-forms}\label{La3}
Recall from Section \ref{La1} that a foliation $\mathcal{F}$ on the Hirzebruch surface $S_\delta$
is given by the class $[s]$ of a global section $ s \in H^{0}( S_{\delta}, \cts \otimes \mcL^{*}) $.
Let $\mathcal L=\mcO(-d_1,-d_2)$ be the tangent sheaf of $\mathcal{F}$. We follow \cite[\S 3.1]{Correa} to obtain the vector field representation of $ s $. To that end, define
\[
 \mcH : = \mcO(1,0)^{\oplus 2} \oplus \mcO(0,1)\oplus \mcO(-\delta,1),
\]
and  consider the \textit{Euler} exact sequence given by
\begin{equation}\label{euler}
0\rightarrow \mcO^{\oplus 2}\xrightarrow{j} \mcH \xrightarrow{d \pi} \cts\rightarrow 0.
\end{equation}
Taking tensor product with $\mcL^* = \mcO(d_1, d_2) $ in the sequence \eqref{euler}, we obtain the exact sequence
\begin{equation}\label{twiseuler}
0\rightarrow {\mcO(d_1, d_2)}^{\oplus 2}
  \xrightarrow{j\otimes 1} \mcH(d_1, d_2)
   \xrightarrow{d \pi\otimes 1} \cts(d_1, d_2)\rightarrow 0.
\end{equation}
The long exact sequence associated to \eqref{twiseuler} reads
\begin{equation}\label{LeCtan}
 \begin{split}
    0 & \rightarrow H^0(S_{\delta},\mcO(d_1, d_2))^{\oplus 2}
 \xrightarrow{{j\otimes 1}^0}
  H^0(S_{\delta}, \mcH(d_1, d_2) )
   \xrightarrow{{d \pi\otimes 1}^0} H^0(S_{\delta},\cts(d_1, d_1) ) \xrightarrow{\delta^0}  \\
      &      \xrightarrow{\delta^0}
      H^1(S_{\delta},\mcO(d_1, d_2))^{\oplus 2} \xrightarrow{{j\otimes 1}^1} H^1(S_{\delta}, \mcH(d_1, d_2) )
      \xrightarrow{{d \pi\otimes 1}^1} H^1(S_{\delta},\cts(d_1, d_2) ) \xrightarrow{\delta^1}
       \cdots ,
 \end{split}
\end{equation}
where
\begin{equation}\label{CohomTan}
  H^q(\cs, {\mcH}(d_1, d_2) ) = 
   H^q(\cs, \mcO(d_1 + 1, d_2) )^{\oplus 2} \oplus H^q(\cs, \mcO(d_1, d_2 + 1) )\oplus H^q(\cs, \mcO(d_1-\delta, d_2+1)),
\end{equation}
for $ q = 0,1,2 $ and
$ ({j\otimes 1}^0)(H_1,H_2) = (X_0H_1, X_1H_1, Y_0H_2, -\delta Y_1 H_1+Y_1H_2) $. The sequence \eqref{LeCtan} has the following interpretation:

Any section $ s $ \emph{in the image of} $ {d \pi\otimes 1}^0 $ is uniquely determined by a
vector field
\begin{equation}\label{fields}
 X = V_0\frac{\partial}{\partial X_0}+V_1\frac{\partial}{\partial X_1}
     + W_0\frac{\partial}{\partial Y_0}+W_1\frac{\partial}{\partial Y_1},
\end{equation}
where
$ V_0, V_1 \in H^0(S_{\delta},\mcO(d_1+1,d_2)) $,
$ W_0 \in H^0(S_{\delta},\mcO(d_1,d_2+1))$ and
$ W_1 \in H^0(S_{\delta},\mcO(d_1-\delta,d_2+1))$,
up to the addition of multiples of the \emph{radial} vector fields
$R_1:=X_0\frac{\partial}{\partial X_0}+X_1\frac{\partial}{\partial X_1}
 -\delta Y_1\frac{\partial}{\partial Y_1}$ and
 $R_2:=Y_0\frac{\partial}{\partial Y_0}+Y_1\frac{\partial}{\partial Y_1}$.
\begin{remark}\label{RepByVF}
We say for brevity that a section
$ s \in H^{0}( S_{\delta}, \cts \otimes \mcL^{*}) $
is \emph{representable by an affine vector field} $X$
if $ s $ lies in the image of the map $ {d \pi\otimes 1}^0 $ from \eqref{LeCtan}.
If this is the case, vector fields $ X_{ij} $ that define $ s $ in the covering \eqref{affcov} may be computed by
writing the product $ d \pi \cdot X $ in the coordinates $(x_{ij},y_{ij})$ described above.

Of course, \emph{every} section $ s $ is representable by an affine vector field if and only if the map
$ {d \pi\otimes 1}^0 $ is surjective, and this is the case \emph{if} (but not \emph{only if})
$ h^1(S_{\delta},{\mathcal L^*}) = 0 $ (see Remark \ref{tauene} below).
\end{remark}

Foliations on $ \cs $ may be also defined in terms of $1$-forms. Indeed, considering the covering
$ \{ V_i \} $ of $ \cs $ and vector fields $ X_i $ associated to a global section
$ s \in H^{0}( S_{\delta}, \cts \otimes \mcL^{*}) $ as in \eqref{cocicle}, the $1$-form associated to $ s $
is given by a collection of $1$-forms $ \Omega_i $ on $ V_i $ such that $ \Omega_i( X_ i ) = 0 $. These $1$-forms glue together into a global section (\emph{the annihilator of $ s $}) in
$ H^0( \cs, \Omega_{\cs}^1( d_1 + 2 - \delta, d_2 + 2) ) $.

Now we proceed with this construction. Let $\mcL=\mcO(-d_1,-d_2) $, with $d_1,d_2\in \mathbb{Z}$.

 First, we see from \cite[Section II, Exercise 5.16 (b)]{Hartshorne} applied to $ \mcF = \Theta_{\cs} $
 that the evaluation map
  $ b : \Theta_{\cs} \times \wedge^2 \Omega_{\cs}^1 \longrightarrow \Omega_{\cs}^1; $
  $  b( X, \omega) = \omega( X ) $,  induces an isomorphism
 $$ \Theta_{\cs}\otimes \mcK_{\cs} = \Theta_{\cs} \otimes \wedge^2 \Omega_{\cs}^1
 \xrightarrow{\tilde{b}} \Omega_{\cs}^1,
 $$
 which gives in turn an isomorphism 
 $ \Theta_{\cs} \cong
 \Omega_{\cs}^1 \otimes \mcK_{\cs}^* = \Omega_{\cs}^1(2-\delta, 2) $, hence
 $$ \Theta_{\cs}\otimes \mcL^* = \Theta_{\cs}( d_1, d_2 ) \cong \Omega_{\cs}^1( d_1 + 2 - \delta, d_2 + 2), $$
and we obtain that
\begin{equation}\label{CTanCCotan}
  H^q( \cs, \Theta_{\cs}( d_1, d_2) )\cong H^q( \cs, \Omega_{\cs}^1( d_1 + 2 - \delta, d_2 + 2) ), \;
  \text{for}\; q = 0, 1, 2.
\end{equation}
Now we seek for bi-homogeneous polynomial affine $1$-forms that represent the sections of \linebreak
$ H^0( \cs, \Omega_{\cs}^1( d_1 + 2 - \delta, d_2 + 2) ) $. To that end, first we dualize \eqref{euler}:
\begin{equation}\label{dual}
   0\rightarrow \Omega^1_{\cs} \xrightarrow{{d\pi}^*} {\mcH}^*
      \xrightarrow{j^*} \mcO^{\oplus 2} \rightarrow 0,
\end{equation}
then we twist \eqref{dual} by $\mcO(d_1-\delta+2,d_2+2)$:
\begin{equation*}
0\rightarrow \Omega^1_{\cs}(d_1-\delta+2,d_2+2) \xrightarrow{{d\pi}^*\otimes 1}
    {\mcH}^*(d_1-\delta+2,d_2+2) \xrightarrow{j^*\otimes 1}
  \mcO(d_1-\delta+2,d_2+2)^{\oplus 2} \rightarrow 0,
\end{equation*}
and we consider the long exact sequence associated to the exact sequence above, part of which reads as follows:
\begin{equation}\label{LeCcotan}
\begin{split}
  0 & \rightarrow H^0(\cs, \Omega^1_{\cs}(d_1-\delta+2,d_2+2) )\xrightarrow{{d\pi}^*\otimes 1^0}
      H^0(\cs, {\mcH}^*(d_1-\delta+2,d_2+2) ) \xrightarrow{j^*\otimes 1^0} \\
    & \xrightarrow{j^*\otimes 1^0}  H^0(\cs,\mcO(d_1-\delta+2,d_2+2))^{\oplus 2}
      \xrightarrow{\delta^0} H^1(\cs, \Omega^1_{\cs}(d_1-\delta+2,d_2+2) )\xrightarrow{{d\pi}^*\otimes 1^1} \cdots,
\end{split}
\end{equation}
where
\begin{equation}\label{CohomCotan}
  \begin{split}
 & H^q(\cs, {\mcH}^*(d_1-\delta+2,d_2+2) ) = \\
 &  H^q(\cs, \mcO(d_1-\delta+1,d_2+2) )^{\oplus 2} \oplus H^q(\cs, \mcO(d_1-\delta+2,d_2+1) )\oplus H^q(\cs, \mcO(d_1+2,d_2+1)),
\end{split}
\end{equation}
for $ q = 0,1,2 $ and
$$ (j^*\otimes 1^0)(A_0,A_1,B_0,B_1) = (X_0A_0 +X_1A_1-\delta Y_1B_1, Y_0B_0+Y_1B_1).$$


As a consequence, we deduce that given an invertible sheaf $\mcL=\mcO(-d_1,-d_2)$ on $ \cs $,
a foliation
$ \mcF = [s] \in \PP H^{0}( S_{\delta}, \cts \otimes \mcL^{*}) $ may not be representable
by a polynomial affine vector field, but it is always representable by some
bi-homogeneous differential $1$-form:
\begin{proposition}\label{forms}
Let $\mcL=\mcO(-d_1,-d_2)$, with $d_1,d_2\in \mathbb{Z}$. Then any foliation
$\mathcal F$ in $\mathrm{Fol}(\mcL,S_{\delta})$ is uniquely determined (up to multiplication by
a non-zero scalar) by a differential $1$-form
 \begin{equation}\label{1-form}
\Omega=A_0\;dX_0+A_1\; dX_1+B_0\; dY_0+ B_1\;dY_1,
 \end{equation}
 where
 $ A_0,A_1 \in H^0( S_{\delta}, \mcO(d_1-\delta+1,d_2+2) ) $,
 $ B_0 \in H^0( S_{\delta}, \mcO(d_1-\delta+2,d_2+1) ) $ and
 $ B_1 \in H^0( S_{\delta}, \mcO(d_1+2,d_2+1) ) $
 are bi-homogeneous polynomials (not all of them equal to $0$)
that satisfy the following two conditions:
\begin{equation}\label{condit1}
   \begin{gathered}
  \Omega( R_1 ) = X_0A_0 +X_1A_1-\delta Y_1B_1 = 0, \mbox{and}  \\
  \Omega( R_2 ) = Y_0B_0+Y_1B_1 = 0.
   \end{gathered}
\end{equation}
Moreover, if $i,j\in \{0,1\}$, a differential $1$-form defining $\mathcal F$ in the affine open set $U_{ij}$
is given by
$ \Omega_{ij} = \tilde{A}_{i'}^{ij}dx_{ij}+\tilde{B}_{j'}^{ij}dy_{ij}$, where $\{i'\}:=\{0,1\}\setminus \{i\}$,
$\{j'\}:=\{0,1\}\setminus \{j\}$ and the correspondence $ H \mapsto \tilde{H} $ is given by
equation \eqref{restriction} and its ilk just below it.
\end{proposition}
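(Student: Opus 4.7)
The plan is to leverage the isomorphism \eqref{CTanCCotan} to translate sections of the twisted tangent sheaf into sections of a twisted cotangent sheaf, and then use the dualized Euler exact sequence \eqref{LeCcotan} to realize the latter as tuples of bi-homogeneous polynomials subject to two linear relations.

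First, \eqref{CTanCCotan} puts $H^0(S_\delta, \Theta_{S_\delta}(d_1,d_2))$ in bijection with $H^0(S_\delta, \Omega^1_{S_\delta}(d_1-\delta+2, d_2+2))$. From the long exact sequence \eqref{LeCcotan}, the arrow ${d\pi}^*\otimes 1^0$ is injective, so the latter space is identified with the kernel of $j^*\otimes 1^0$ sitting inside $H^0(S_\delta, \mcH^*(d_1-\delta+2, d_2+2))$. The decomposition \eqref{CohomCotan} with $q=0$, together with the fact recalled in Section \ref{La2} that global sections of $\mcO(a,b)$ on $S_\delta$ correspond to bi-homogeneous polynomials of bi-degree $(a,b)$, then identifies this ambient space with the space of quadruples $(A_0,A_1,B_0,B_1)$ of bi-homogeneous polynomials of the stated bi-degrees, and the explicit formula for $j^*\otimes 1^0$ given just after \eqref{CohomCotan} shows that the kernel condition is precisely \eqref{condit1}.

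Second, the quadruple $(A_0, A_1, B_0, B_1)$ is read as the $1$-form $\Omega = A_0\, dX_0 + A_1\, dX_1 + B_0\, dY_0 + B_1\, dY_1$; this reflects the natural pairing of $\mcH^*$ with the local frame $\{dX_0, dX_1, dY_0, dY_1\}$ coming from dualizing \eqref{euler}, under which the two equations in \eqref{condit1} are literally $\Omega(R_1) = 0$ and $\Omega(R_2) = 0$. Uniqueness of $\Omega$ up to a non-zero scalar then follows from the projectivization in the definition of a foliation as a class $[s]\in \PP H^0(S_\delta, \Theta_{S_\delta}\otimes \mcL^*)$.

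Third, for the local representative on $U_{ij}$, one pulls $\Omega$ back along the obvious section of $\pi$ over $U_{ij}$ determined by the normalization $X_i = Y_j = 1$: this kills $dX_i$ and $dY_j$ and leaves only the terms indexed by $i' = 1-i$ and $j' = 1-j$, yielding $\Omega_{ij} = \tilde A_{i'}^{ij}\, dx_{ij} + \tilde B_{j'}^{ij}\, dy_{ij}$ via the substitution convention \eqref{restriction}; for example on $U_{00}$ one substitutes $X_0 = Y_0 = 1$, $X_1 = x_{00}$, $Y_1 = y_{00}$ and the remaining three charts are entirely analogous. The main step, which is essentially bookkeeping rather than a conceptual obstacle, is to check that the transpose map $j^*\otimes 1^0$ displayed after \eqref{CohomCotan} indeed matches the two relations in \eqref{condit1}; once this matching is in place, every other item in the proposition is formal.
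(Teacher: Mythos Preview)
Your proposal is correct and follows essentially the same approach as the paper: the paper does not give a separate proof block for this proposition, but the discussion immediately preceding it sets up exactly the chain you describe, namely the isomorphism \eqref{CTanCCotan}, the dualized and twisted Euler sequence, the injectivity of ${d\pi}^*\otimes 1^0$ in \eqref{LeCcotan}, and the explicit formula for $j^*\otimes 1^0$ yielding the two relations \eqref{condit1}. Your handling of the local restriction via the section $X_i = Y_j = 1$ of $\pi$ over $U_{ij}$ is likewise what the paper intends by its reference to \eqref{restriction}.
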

Let $\tau $ be the kernel of the Jacobian of $\psi$ in \eqref{ruled}. It is a sub-line bundle of $TS_{\delta}$
and induces an exact sequence
\begin{equation*}
0\rightarrow \tau \rightarrow  TS_{\delta}\xrightarrow{d\psi} N\rightarrow 0
\end{equation*}
where $ N $ is the normal bundle to the ruling (see Equation (1.2) in \cite{Gomez-Mont}).
We see from \cite[Lemma 1.4]{Gomez-Mont} and \eqref{CoC} that
$ \mcO( \tau ) = \mcO(-\delta,2) $ and $ \mcO( N ) = \mcO(2,0) $, so that
the sequence above corresponds to
\begin{equation}\label{exact}
0\rightarrow \mcO(-\delta,2) \rightarrow
 \Theta_{S_{\delta}}\xrightarrow{d\psi} \mcO(2,0) \rightarrow 0.
\end{equation}
The spaces of foliations with tangent bundles $ \tau $ and $ N $ will play a role in the results that follow (specially in Proposition \ref{isolated} below). For this reason, our next three remarks gather information about them.

\begin{remark}\label{tauene}
We study the representation by affine vector fields
 of sections in $ H^0(S_{\delta}, \Theta_{S_{\delta}}(\delta,-2) ) $ and in
 $ H^0(S_{\delta}, \Theta_{S_{\delta}}(-2, 0) ) $, in the context of \eqref{LeCtan}.

Claim:
 $ h^1(S_{\delta}, \mcO(\tau^*) ) = h^1(S_{\delta}, \mcO(\delta,-2) ) $ and
 $ h^1(S_{\delta}, \mcO(N^*) ) = h^1(S_{\delta}, \mcO(-2,0) ) $  are equal to $ 1 \neq 0 $.
Indeed, let $ D = \delta F -2 M  $ and recall that $ K_{\cs} - D = -2 F $, so that
$ h^q(S_{\delta}, \mcO(\delta,-2) ) = h^{2-q}(S_{\delta}, \mcO(-2,0) ) $, for $ q = 0,1,2 $,
by Serre duality. Moreover,
\[
 h^0(S_{\delta}, \mcO(\delta,-2) )=0,  \text{  because $ -2 < 0 $ }
\]
and $ h^{0}(S_{\delta}, \mcO(-2,0) ) = h^{0}(\PP^1, \mcO_{\PP^1}(-2) ) = 0 $. Hence, the Euler
characteristic $ \chi( \mcO(\delta,-2) ) $ is equal, on the one hand, to $ - h^1(S_{\delta}, \mcO(\delta,-2) )$
and on the other hand, to $ \frac{1}{2} D\cdot(D - K_{\cs}) + \chi( \mcO ) = \frac{1}{2}(-4) + 1 = -1 $, by the Riemann-Roch theorem.

Having the claim, it follows from \eqref{LeCtan} that
\textit{no section $ s \in H^0(S_{\delta}, \Theta_{S_{\delta}}(\delta,-2) ) $
is representable by an affine vector field}. Indeed, we see from \eqref{CohomTan} that
$$ h^0(S_{\delta}, \mcH(\delta,-2)) =
   h^0(S_{\delta}, \mcO(1+\delta, -2))+
   h^0(S_{\delta}, \mcO(\delta, -1))+
   h^0(S_{\delta}, \mcO(0, -1)) = 0, $$
because each summand is equal to $ 0 $
(by the argument in the displayed equation above).
The best we can say from this computations on the value of
$ h^0(S_{\delta}, \Theta_{S_{\delta}}(\delta,-2) ) $ is that it is $ \geq 1 $ (from \eqref{exact})
and that it is $ \leq 2 $ (from \eqref{LeCtan}). See Remark \ref{tau} below for the actual value.

For the case of sections $ s \in H^0(S_{\delta}, \Theta_{S_{\delta}}(-2, 0) ) $, we have already seen (a couple of lines above) that $ h^0(S_{\delta}, \mcO(-2, 0) ) = 0 $.
Similar computations to the ones above 
show that
$$ h^0(S_{\delta}, \mcH(-2,0)) =
   h^0(S_{\delta}, \mcO(-1, 0))+
   h^0(S_{\delta}, \mcO(-2, 1))+
   h^0(S_{\delta}, \mcO(-(\delta+2), 1)) = h^0(S_{\delta}, \mcO(-2, 1)), $$ which is
equal to $ 0 $ for $ \delta = 0, 1 $, and it is equal to $ \delta-1 $ for $ \delta \geq 2 $.
\textit{The conclusion is that no
such a section $ s $ is representable by an affine vector field for $ \delta = 0, 1 $, and that
$ h^0(S_{\delta}, \Theta_{S_{\delta}}(-2, 0) ) \geq \delta-1 $, for $ \delta \geq 2 $.}

We conclude this remark by saying that in  Remark \ref{ene} below we will show however that
$$
h^0(S_{\delta}, \Theta_{S_{\delta}}(-2, 0) )
 = \begin{cases}
     1, & \mbox{if } \delta = 0 \\
     0, & \mbox{if } \delta = 1 \\
     \delta-1, & \mbox{if } \delta \geq 2.
   \end{cases}
$$
In concern with Remark \ref{RepByVF}, this computation shows that
\textit{for $ \delta \geq 2 $,} in the corresponding exact sequence \eqref{LeCtan}, the map
$ d\pi \otimes 1^0 $ is not only injective but also surjective and hence
\textit{the map $ \delta^0 $ is the zero map.}
\end{remark}

\begin{remark}\label{tau}
{\it The unique foliation in} \textrm{Fol}$(\mcO(-\delta,2), S_{\delta})$
{\it is the one given by the ruling}
$S_{\delta}\rightarrow \mathbb{P}^1$. Indeed, by Proposition \ref{forms}, any foliation $\mathcal F$ in
\textrm{Fol}$(\mcO(-\delta,2), S_{\delta})$, is representable by an
affine differential $1$-form $ \Omega $ as in \eqref{1-form},
 where
 $ A_0,A_1 \in H^0( S_{\delta}, \mcO(1,0) ) $,
 $ B_0 \in H^0( S_{\delta}, \mcO(2,-1) ) $ and
 $ B_1 \in H^0( S_{\delta}, \mcO(\delta+2,-1) ) $
 are bi-homogeneous polynomials
that satisfy the conditions in \eqref{condit1}.
Since the last two sheaves have no non-zero global section, it follows that $B_0=B_1=0$ and
hence, $\mathcal F$ is defined by any non-zero scalar multiple of the differential form
$ \Omega_{\tau} = X_1\;dX_0-X_0\; dX_1 $, which corresponds to the ruling. Finally, we see
from the local expressions at the end of Lemma \ref{tau} that $ \mcF $ {\it has no singularities.}
\end{remark}

\begin{remark}\label{ene}
Now consider the case of foliations with tangent sheaf $\mcO(N)=\mcO(2,0)$. On the one hand, recall
from \cite[Proposition 2.4]{Gomez-Mont} that any $ \mcF $ in Fol$(\mcO(2,0), S_{\delta})$
\textit{with isolated singularities} is actually smooth (that is, it has no singularities at all).
On the other hand, Brunella in \cite{Brunella} (as quoted in \cite{L-P}) states that a rational
surface $ Z $ carries a smooth holomorphic foliation $\mathcal{G}$ if and only if $Z$ is a Hirzebruch
surface and $\mathcal{G}$ a rational fibration. With these facts in mind, it should be clear that
$ \mcF $ is a rational fibration only if $ \delta = 0 $. Now we prove it:


It follows from Proposition \ref{forms} that $\mathcal F$ in Fol$(\mcO(2,0), S_{\delta})$ is
representable by an affine differential $1$-form $ \Omega $ as in \eqref{1-form},
where
 $ A_0,A_1 \in H^0( S_{\delta}, \mcO(-(\delta+1),2) ) $,
 $ B_0 \in H^0( S_{\delta}, \mcO(-\delta,1) ) $ and
 $ B_1 \in H^0( S_{\delta}, \mcO(0,1) ) $
 are bi-homogeneous polynomials that satisfy \eqref{condit1}. We distinguish three cases:

- Case 1: $\delta=0 $. As in Remark \ref{tau}, one can prove
 that {\it the unique foliation $ \mcF $ in {\rm Fol}$(\mcO(2,0), S_{0})$ is the one defined by
 $ \Omega = Y_1 dY_0 - Y_0 dY_1 $}: the ruling of
 $ S_{0}=\mathbb{P}^1\times \mathbb{P}^1$ with respect to the projection onto the second factor.
 Hence, $ \mcF $ is smooth.

- Case 2: $\delta=1 $. The complex vector spaces $ H^0( S_{1}, \mcO(-2,2) ) $ and
 $ H^0( S_{1}, \mcO(-1,1) ) $ consist of
 the scalar multiples of $ Y_1^2 $ and $ Y_1 $, respectively, and
 $ H^0( S_{1}, \mcO(0,1) ) $ contains only linear forms in $ Y_0 $ and $ Y_1 $. Hence, the affine $1$-forms
 \eqref{1-form} have the following shape:
 $$ \Omega = a Y_1^2 dX_0 + b Y_1^2 dX_1 + a_1 Y_1 dY_0 + Y_1 ( c_0 X_0 + c_1 X_1 ) dY_1, $$
where $ a, b, a_1, c_0, c_1 \in \CC $. Thus it follows easily that {\it the unique
 $ \Omega $ that satisfies conditions \eqref{condit1} is $ \Omega = 0 $.}

- Case 3: $\delta \geq 2 $. {\it We claim that no
  foliation
  $\mathcal F$ in $\mathrm{Fol}(\mcO(2,0), S_{\delta})$ has
  isolated singularities.} Indeed, consider a
  $1$-form $ \Omega $ as in \eqref{1-form} that represents such a foliation.
  Let $D$ denote the divisor
  $-(\delta+1)F + 2M $. Then  $ D\cdot M_0 < 0 $ and
  $ (D-M_0)\cdot M_0 < 0 $, which means that $M_0$ is a double fixed component of the complete
  linear system $ |D| $ and, therefore, $ Y_1^2 $ divides both $ A_0 $ and $ A_1 $. Now, from the first equation in \eqref{condit1} one gets that $ Y_1 $ divides $ B_1 $ and,
  by the second equation therein, that $Y_1^2$ divides $B_0$: this is a contradiction unless
  $ B_0 = B_1 = 0 $, because $ B_0 \in H^0( S_{\delta}, \mcO(-\delta,1) )$ and the latter consists of the
  scalar multiples of $Y_1$. Thus, we conclude that
  $ \Omega = Y_1^2 ( A_0^{\prime} dX_0 + A_1^{\prime} dX_1 ) $ for some
  $ A_j^{\prime} \in H^0( S_{\delta}, \mcO(\delta-1,0) ) = H^0( \PP_1, \mcO_{\PP_1}(\delta-1) ) $,
  and, from the first equation in \eqref{condit1}, that
  \begin{equation}\label{FinalN}
    \Omega =
    \Omega_N =
    Y_1^2 A_{\delta-2}(X_0, X_1)  ( X_1 dX_0 - X_0 dX_1 ) = Y_1^2 A_{\delta-2}(X_0, X_1)
     \Omega_{\tau},
  \end{equation}
  for some $ A_{\delta-2} \in H^0( \PP_1, \mcO_{\PP_1}(\delta-2) ) $ (see Remark \ref{tau} above for
  $ \Omega_{\tau}$). We conclude from \eqref{FinalN} that ${\mathcal F}$ has no isolated singularities
  and, moreover, from \eqref{CTanCCotan}, that
  $$ h^0( \cs, \Theta_{\cs}(-2,0) ) =
    h^0( \cs, \Omega_{\cs}^1(-\delta, 2) ) =
    h^0( \PP_1, \mcO_{\PP_1}(\delta-2) ) = \delta - 1.$$
\end{remark}

Our next result is a refinement of \cite[Proposition 2.2]{Gomez-Mont}: It computes those tangent sheaves $ \mcL $ for which a foliation $ \mcF \in \mathrm{Fol}( \mcL, \cs ) $
 may have isolated singularities:
\begin{proposition}\label{isolated}
Let $d_1,d_2\in \mathbb{Z}$ and let $\mathcal L=\mcO(-d_1,-d_2)$ be an invertible sheaf on $S_{\delta}$
 such that there exists a foliation $ \mcF \in \mathrm{Fol}( \mcL, \cs ) $
 with isolated singularities. If $ \delta=0 $ (respectively, $ \delta\geq 1 $) then, either
 $\mcL\cong \mcO(\tau)$, or $\mcL\cong \mcO(N)$, or $d_1\geq 0$ and $d_2\geq 0$ (respectively, either
 $ \mcL\cong \mcO(\tau) $,  or $d_1\geq -1$ and $d_2\geq 0$).
\end{proposition}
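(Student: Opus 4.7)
The plan is to combine the $1$-form representation of Proposition \ref{forms} with the effectivity criterion of Proposition \ref{effective} and perform a case analysis on $d_2$. Write $\mcF$ as $\Omega = A_0\, dX_0 + A_1\, dX_1 + B_0\, dY_0 + B_1\, dY_1$ subject to the radial conditions \eqref{condit1}. By Proposition \ref{effective}, each coefficient is forced to vanish once the relevant second bi-degree component ($d_2+2$ for the $A_i$, $d_2+1$ for the $B_j$) is negative; the first bi-degree components give analogous constraints.

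If $d_2 \leq -3$, every coefficient vanishes and no such foliation exists. For $d_2 = -2$ one has $B_0 = B_1 = 0$ and $A_0, A_1$ depending only on $X_0, X_1$, so $X_0 A_0 + X_1 A_1 = 0$ gives $\Omega = A(X_0, X_1)\, \Omega_\tau$ with $\deg A = d_1 - \delta$; isolated singularities force $A$ to be a nonzero constant, hence $d_1 = \delta$ and $\mcL \cong \mcO(\tau)$. For $d_2 = -1$, the coefficients $B_0, B_1$ again depend only on $X_0, X_1$ and the equation $Y_0 B_0 + Y_1 B_1 = 0$ forces them to vanish; a parallel computation yields $\Omega = P\, \Omega_\tau$ with $P$ of bi-degree $(d_1-\delta, 1)$, whose zero locus is a nonempty divisor when $P \neq 0$, contradicting isolated singularities.

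For $d_2 \geq 0$, the analysis is symmetric in the roles of the two factors. If $d_1$ is sufficiently negative, effectivity of the bi-degree $(d_1 - \delta + 1, d_2 + 2)$ fails and $A_0 = A_1 = 0$. For $\delta = 0$, the two radial conditions then yield $\Omega = B(X, Y)(Y_1\, dY_0 - Y_0\, dY_1)$ with $B$ of bi-degree $(d_1 + 2, d_2)$; isolated singularities force $B$ to be a nonzero constant, singling out $\mcL \cong \mcO(N)$. For $\delta \geq 1$, the additional term $-\delta Y_1 B_1$ in the first radial condition forces $B_1 = 0$ and hence $B_0 = 0$, ruling out too-negative $d_1$; the remaining boundary case $d_1 = -2$ (where $\mcL \cong \mcO(N)$) is handled by Remark \ref{ene}. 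Intermediate values (most notably $\delta = 0, d_1 = -1, d_2 \geq 0$) are dispatched by direct inspection, where $A_0, A_1$ are polynomials in $Y$ only so $X_0 A_0 + X_1 A_1 = 0$ kills them, reducing to the previous factorization with nonconstant residual, again contradicting isolated singularities.

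The main obstacle is tracking the asymmetry between $\delta = 0$ and $\delta \geq 1$, which is controlled precisely by the $-\delta Y_1 B_1$ term in \eqref{condit1} and explains why $\mcO(N)$ appears in the conclusion only for $\delta = 0$ while being excluded for $\delta \geq 1$.
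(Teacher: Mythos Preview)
Your approach differs from the paper's in a substantive way: the paper invokes \cite[Proposition~2.2]{Gomez-Mont} as a black box to obtain immediately that either $\mcL\cong\mcO(\tau)$, $\mcL\cong\mcO(N)$, or $d_1\geq -2$ and $d_2\geq 0$, and then only refines the case $d_1=-2$ for $\delta\geq 1$ via a fixed-component argument. You instead attempt a self-contained proof using only Proposition~\ref{forms} and Proposition~\ref{effective}. For $\delta=0$ your argument is fine (the genuine $\PP^1\times\PP^1$ symmetry makes the $d_1$-analysis mirror the $d_2$-analysis), and your treatment of $d_2\leq -1$ is correct.

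There is, however, a real gap for $\delta\geq 1$ and $d_2\geq 0$. You claim that effectivity of the bi-degree $(d_1-\delta+1,d_2+2)$ fails for ``sufficiently negative $d_1$'', forcing $A_0=A_1=0$, and that the ``remaining boundary case $d_1=-2$'' is $\mcL\cong\mcO(N)$ and is handled by Remark~\ref{ene}. Both assertions are inaccurate. By Proposition~\ref{effective}, that bi-degree is effective precisely when $d_1+\delta(d_2+1)+1\geq 0$, so $A_0,A_1$ are forced to vanish only for $d_1\leq -\delta(d_2+1)-2$, not merely for $d_1\leq -3$. And Remark~\ref{ene} treats only $\mcL=\mcO(N)=\mcO(2,0)$, i.e.\ $d_1=-2$ together with $d_2=0$. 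The entire range $-\delta(d_2+1)-1\leq d_1\leq -2$ with $d_2\geq 1$ (and also $d_1\in\{-3,\dots,-\delta-1\}$ with $d_2=0$ when $\delta\geq 2$) is left uncovered. Concretely, for $\delta=1$, $d_1=-2$, $d_2=1$, the coefficients $A_i$ have bi-degree $(-2,3)$, which \emph{is} effective (every section is a multiple of $Y_1^2$), so nothing in your argument applies.

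The missing ingredient is exactly the fixed-component computation the paper uses: for $\delta\geq 1$, $d_1\leq -2$, $d_2\geq 0$, set $D=(d_1-\delta+1)F+(d_2+2)M$; then $D\cdot M_0=d_1-\delta+1<0$ and $(D-M_0)\cdot M_0=d_1+1<0$, so $M_0$ is a double fixed component of $|D|$ and $Y_1^2$ divides both $A_0$ and $A_1$. The relations \eqref{condit1} then force $Y_1\mid B_1$ and $Y_1\mid B_0$, so $Y_1$ divides every coefficient of $\Omega$ and the singular set is not isolated. Adding this single step closes the gap and makes your from-scratch strategy complete.
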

\begin{proof}
Under the correspondence \eqref{CoC}, \cite[Proposition 2.2]{Gomez-Mont}
 states that if there exists a foliation $ \mcF \in \textrm{Fol}( \mcL, \cs ) $
 with isolated singularities, then
 either $ \mcL\cong \mcO(\tau) $, or $\mcL\cong \mcO(N)$, or $ (2+d_1) F + d_2 M $ belongs to the
 closure of Amp$(\cs)$ (which coincides with the nef cone $P(\cs)$). By Proposition \ref{ample}, the
 last condition is equivalent to the system of inequalities $d_1\geq -2, d_2\geq 0$. In the case $\delta=0$,
 the double ruling of $S_0$ shows that the mentioned system is equivalent to $d_1\geq 0$ and $d_2\geq 0$ and the statement for $ \delta = 0 $ has been proved.

Now assume that $\delta\geq 1$. First, $ \mcL $ cannot be isomorphic to $ \mcO(N) $ by Remark \ref{ene}.
Finally, assume that $d_1=-2$ and $d_2\geq 0$. We will show that every
$ \mcF \in \textrm{Fol}( \mcO(-2, -d_2), \cs ) $ has no isolated singularities. Indeed,
 by Proposition \ref{forms}, $\mathcal F$ is representable by an affine differential $1$-form
 $$
  \Omega = A_0\;dX_0+A_1\; dX_1+B_0\; dY_0+ B_1\;dY_1,
 $$
 where $ A_0, A_1 \in H^0( \cs, \mcO(-(1+\delta),d_2+2) ), B_0 \in H^0( \cs, \mcO(-\delta,d_2+1) ) $
 and
 $ B_1 \in H^0( \cs, \mcO(0,d_2+1) ) $ satisfy the conditions in \eqref{condit1}. If $D$ denotes the divisor $ -(1+\delta)F+(d_2+2)M $, it holds that $ D\cdot M_0=-(1+\delta) < 0 $
and $ (D-M_0)\cdot M_0 = -1 < 0 $. Therefore the complete linear system $|D|$ has $M_0$ as double fixed
component. This shows that $Y_1^2$ divides $A_0$ and $A_1$. It follows from \eqref{condit1} that $Y_1$
divides $B_1$ and $B_0$ as well, and hence that $ Y_1 $ is a factor of all the coefficients of $ \Omega $, which shows that $ \mcF $ has no isolated singularities. This finishes the proof.
\end{proof}

\begin{proposition}\label{lemma3}
 Let $\mcL=\mcO(-d_1,-d_2)$ be an invertible sheaf on $S_{\delta}$ such that
 $d_2\geq 0$ and, either $\delta=0$ and $d_1\geq 0$, or $\delta\geq 1$ and $d_1\geq -1$.
 Then $h^1(S_{\delta},\mcL^*)=0$.
\end{proposition}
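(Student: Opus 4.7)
My plan is to push the computation down to $\PP^1$ via the ruling $\psi: \cs \to \PP^1$ of \eqref{ruled}, and then combine the Leray spectral sequence with the projection formula. A convenient preliminary observation is that both cases of the hypothesis ($\delta = 0, d_1 \geq 0$ and $\delta \geq 1, d_1 \geq -1$) imply $d_1 \geq -1$, so the argument will be uniform.

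\textbf{Key steps.} First, I would show that $R^1 \psi_* \mcL^* = 0$. Since a fiber of $\psi$ is a projective line $F$ and $\mcL^*|_{F} = \mcO_{\PP^1}(d_2)$ (from $F^2 = 0$, $M \cdot F = 1$), the hypothesis $d_2 \geq 0$ gives $H^1(F, \mcL^*|_F) = 0$; cohomology and base change along the flat proper map $\psi$ then forces $R^1 \psi_* \mcL^* = 0$. Next, I would compute $\psi_* \mcL^*$. Writing $H := \mcO_{\PP(\mcE)}(1)$ with $\mcE = \mcO_{\PP^1} \oplus \mcO_{\PP^1}(-\delta)$, a short intersection-theoretic check shows $H \equiv M_0$ in $\mathrm{Cl}(\cs)$, and hence $M = M_0 + \delta F$, so
\[
  \mcL^* \;=\; \mcO(d_1, d_2) \;=\; \psi^*\mcO_{\PP^1}(d_1 + d_2 \delta) \otimes H^{\otimes d_2}.
\]
Combining the projection formula with the standard identity $\psi_* H^{\otimes d_2} = \mathrm{Sym}^{d_2} \mcE = \bigoplus_{k=0}^{d_2} \mcO_{\PP^1}(-k\delta)$, I would conclude
\[
  \psi_* \mcL^* \;=\; \bigoplus_{j = 0}^{d_2} \mcO_{\PP^1}(d_1 + j\delta).
\]

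\textbf{Conclusion.} Because $R^1 \psi_* \mcL^* = 0$ and $\PP^1$ has cohomological dimension one, the Leray spectral sequence for $\psi$ degenerates and yields $H^1(\cs, \mcL^*) \cong H^1(\PP^1, \psi_* \mcL^*)$. Therefore
\[
  H^1(\cs, \mcL^*) \;=\; \bigoplus_{j=0}^{d_2} H^1\bigl(\PP^1, \mcO_{\PP^1}(d_1 + j\delta)\bigr),
\]
and every summand vanishes, since $d_1 + j\delta \geq d_1 \geq -1$ (using $\delta \geq 0$ and $j \geq 0$) and $H^1(\PP^1, \mcO_{\PP^1}(n)) = 0$ precisely for $n \geq -1$.

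\textbf{Main obstacle.} No step is conceptually deep; the only genuine risk is one of conventions—pinning down that $\mcO_{\PP(\mcE)}(1)$ corresponds to $M_0$ (not to $M$), rewriting $d_1 F + d_2 M$ in the $(M_0, F)$ basis, and correctly expanding $\mathrm{Sym}^{d_2}(\mcO_{\PP^1} \oplus \mcO_{\PP^1}(-\delta))$. For $\delta = 0$ one could instead invoke Künneth on $\PP^1 \times \PP^1$ directly (which already yields the stronger bound $d_1 \geq -1$), but the Leray approach above treats both ranges of $\delta$ in a single stroke.
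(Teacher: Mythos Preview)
Your argument is correct, and it follows a genuinely different route from the paper's proof. The paper splits into two cases: for $d_1,d_2\geq 0$ it simply cites \cite[Proposition~2.3]{Laface}, and for the boundary case $\delta\geq 1$, $d_1=-1$ it argues that $M_0$ is a fixed component of $|-F+d_2M|$, computes $h^0$ via Laface's formula, matches it against $\chi(\mcO(-1,d_2))$ by Riemann--Roch, and kills $h^2$ by Serre duality. Your approach instead pushes everything forward along the ruling $\psi$ and reduces to cohomology of line bundles on $\PP^1$ via the Leray spectral sequence and the projection formula. This has two advantages: it is self-contained (no external reference needed), and it handles all values of $\delta$ and $d_1$ in a single uniform computation---indeed, as you note, it even yields the slightly stronger vanishing for $\delta=0$, $d_1\geq -1$. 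The paper's argument, by contrast, is shorter on the page and avoids having to identify $\mcO_{\PP(\mcE)}(1)$ with $M_0$ and expand $\mathrm{Sym}^{d_2}\mcE$, at the cost of invoking an outside result and treating the boundary case separately.
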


\begin{proof}
 If $d_1,d_2\geq 0$ the result follows from \cite[Proposition 2.3]{Laface}. So let us assume
 $ \delta\geq 1 $, $d_1= -1$ and $d_2\geq 0$. Since $(-F+d_2M)\cdot M_0=-1$, it holds that $M_0$
 is a fixed component of the complete linear system $|-F+d_2M|$ and therefore
$$ h^0(S_{\delta},\mcO(-1,d_2))=h^0(S_{\delta},\mcO(\delta-1,d_2-1))
  =\frac{\delta}{2}d_2(d_2+1)=\chi(\mcO(-1,d_2)), $$
where the second equality comes again from \cite[Proposition 2.3]{Laface} and the third
from the Riemann-Roch theorem. The result follows from $ h^2(S_{\delta},\mcO(-1,d_2)) = h^0(S_{\delta},\mcO(\delta-1,-(d_2+2))) = 0 $
(by Serre duality).
\end{proof}
 \begin{remark}\label{ShfofIds}
Under the conditions of Proposition \ref{lemma3}, every foliation in Fol$(\mcL,S_{\delta})$
 has two equivalent descriptions: through affine vector fields (\ref{fields}) --in view of Remark \ref{RepByVF}-- and through some affine differential
 $1$-form \eqref{1-form}. Moreover, Proposition \ref{isolated} shows that this double
 description includes all foliations on $S_{\delta}$ with isolated singularities, except the one with tangent
 bundle $ \tau $ and, in the case $\delta=0$, also the one with tangent bundle $ N $ (see Remark \ref{tauene}
 above). However, the foliations associated to these exceptional cases are actually smooth (by Remarks \ref{tau} and \ref{tauene}, respectively).

Assume now that a section $ s \in H^{0}( S_{\delta}, \cts \otimes \mcL^{*}) $ is representable by an affine vector
field $ X $ as in \eqref{fields}, then the affine $1$-form $\Omega$  in \eqref{1-form} that corresponds
to $ s $ is given by
\begin{equation}\label{BigOmega}
\begin{aligned}
\Omega  & =
 \begin{vmatrix}
  dX_0 & dX_1 & dY_0 & dY_1\\
  X_0 & X_1 & 0 & -\delta Y_1\\
  0 & 0 & Y_0 & Y_1\\
  V_0 & V_1 & W_0 & W_1
 \end{vmatrix} \\
        & =
  \begin{vmatrix}
  X_1 & 0 & -\delta Y_1\\
  0 & Y_0 & Y_1\\
  V_1 & W_0 & W_1
  \end{vmatrix} dX_0 -
   \begin{vmatrix}
    X_0 & 0 & -\delta Y_1\\
      0 & Y_0 & Y_1\\
    V_0 & W_0 & W_1
    \end{vmatrix} dX_1 +
      \begin{vmatrix}
        X_0 & X_1 \\
        V_0 & V_1
      \end{vmatrix} (-Y_1 dY_0 + Y_0 dY_1)\\
        & =  A_0\;dX_0+A_1\; dX_1+B_0\; dY_0+ B_1\;dY_1.
    \end{aligned}
\end{equation}
 This follows because $ \Omega( R_1 ) = \Omega( R_2 ) = \Omega( X ) = 0 $.
 A further conclusion is that the sheaf of ideals $ I_Z $ of the singular
 scheme $ Z = Z_s $ of the section $ s $ is the ideal $ I_Z = (A_0, A_1, B_0, B_1) \subset \mcO $
 generated by the coefficients of $ \Omega $: this can be deduced from the local expressions for
 $ \Omega $ at the end of Proposition \ref{forms} together with the just proven fact that
 $ \Omega( X ) = 0 $.
 \end{remark}
\section{Global endomorphisms of $T\cs$}  \label{La4}
In this section we compute the space of global endomorphisms of the tangent bundle of a Hirzebruch surface. This computation will be essential to establish the main result of this paper.
\begin{theorem}\label{endtang}
 Consider the Hirzebruch Surface  $ S_{\delta} $, with $ \delta \geq 0 $.
 The space of global endomorfisms of its tangent bundle $T\cs$ has dimension
 $$
 h^0(S_{\delta},\mathrm{Hom}_{ \mcO} (\cts, \cts))=
    \begin{cases}
              2 & \mbox{if } \delta = 0\\
       \delta & \mbox{if } \delta \geq 1.
    \end{cases}
 $$
 Moreover, every such global endomorphism $\Phi$ is uniquely determined by a matrix $ A $ which is equal to
 \begin{equation*}
 \begin{array}{cccl}
   A(a, d) &= & a \cdot \mathbf{1}_{2\times 2} \oplus d \cdot \mathbf{1}_{2\times 2} &\text{ if } \delta = 0,  \\ \\
   A(a) &= & a \cdot \mathbf{1}_{4\times 4} &\text{ if } \delta = 1, \text{ and }\\ \\
   A(a,C) &= & \left( \begin{matrix} 
       a & 0 & 0 & 0 \\
       0 & a & 0 & 0 \\
       X_{1}Y_{1}C(X) & -X_{0}Y_{1}C(X)  & a & 0 \\
       0 & 0 & 0 & a
    \end{matrix} \right) &\text{ if } \delta \geq 2 ,
 \end{array}
\end{equation*}
 where $ a, d \in \mathbb{C} $, $ \mathbf{1}_{n\times n} $
 denotes the $ n\times n $ identity matrix ($n \in \mathbb{N}$) and $ C(X) = C(X_{0}, X_{1}) $ is a homogeneous polynomial of degree $ \delta - 2 $.

Moreover the following properties are satisfied:
 \begin{itemize}
\item[(a)] $\Phi$ is invertible if and only if $a d\neq 0$ if $ \delta = 0 $ and $a\neq 0$ if $ \delta \geq 1 $.
\item[(b)] For any invertible sheaf $ \mcL $ on $ \cs $, if \,
 $ \Phi \in \text{H}^0(S_{\delta},\mathrm{Hom}_{ \mcO} (\cts, \cts)) $ and\\
 $ s \in \text{H}^{0}( S_{\delta}, \mathrm{Hom}_{ \mcO} (\mcL , \cts) )$, then
 $ \Phi \circ s = \Phi( s )
 \in \text{H}^{0}( S_{\delta}, \mathrm{Hom}_{ \mcO} (\mcL , \cts) )$.

 In particular, if $ s $ is representable by the affine vector field
 \[
 X = V_0\frac{\partial}{\partial X_0}+V_1\frac{\partial}{\partial X_1}
    + W_0\frac{\partial}{\partial Y_0}+W_1\frac{\partial}{\partial Y_1},
 \]
 then $ \Phi(s) $ is representable by the affine vector field
 \[
 X':=V'_0\frac{\partial}{\partial X_0}+V'_1\frac{\partial}{\partial X_1}+W'_0\frac{\partial}{\partial Y_0}+W'_1\frac{\partial}{\partial Y_1},
 \]
 where $(V_0',V_1',W_0',W_1')^t = A \cdot (V_0,V_1,W_0,W_1)^t$ and the matrix $ A $ represents $ \Phi $.
 \footnote{The superscript $t$ denotes the transpose of the vector and the dot $ \cdot $, matrix multiplication.}
 \item[(c)] Under the hypothesis of representability in (b), let
 $ \Omega = A_0\;dX_0+A_1\; dX_1+B_0\; dY_0+ B_1\;dY_1 $ and
 $ \Omega' = A_0'\;dX_0+A_1'\; dX_1+B_0'\; dY_0+ B_1'\;dY_1 $ be the affine $1$-forms that represent the
 sections $ s $ and $ \Phi( s ) $ respectively, through \eqref{BigOmega} and let $ A $ represent $ \Phi $. Then
 $$( A_0',A_1',B_0', B_1' ) = ( A_0,A_1,B_0, B_1 )\cdot A. $$
 \end{itemize}
 The hypothesis of representability in (b) and (c) holds, in particular, for any  section $ s $ where the corresponding invertible sheaf $\mathcal L = \mcO(-d_1,-d_2)$ satisfies the conditions of Proposition \ref{lemma3}.

 \end{theorem}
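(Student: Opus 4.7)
The plan is a cohomological computation of $h^0(\mathrm{End}(\cts))$ starting from \eqref{exact}, followed by an explicit description of the endomorphisms via the affine vector field representation (Remark \ref{RepByVF}). I would first apply the functor $\mathcal{H}om_{\mcO}(-,\cts)$ to \eqref{exact}: since both outer terms are line bundles, the higher $\mathcal{E}xt$ sheaves vanish, and I obtain the short exact sequence
\[
0 \to \cts(-2,0) \to \mathrm{End}(\cts) \to \cts(\delta,-2) \to 0.
\]
Taking the associated long exact sequence in cohomology and invoking Remarks \ref{tau} and \ref{ene}, I get $h^0(\cts(\delta,-2))=1$, generated by the inclusion $\iota\colon \mcO(-\delta,2)\hookrightarrow\cts$, and $h^0(\cts(-2,0))=1,0,\delta-1$ for $\delta=0,1,\geq 2$ respectively. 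Since $\mathrm{id}_{\cts}\in H^0(\mathrm{End}(\cts))$ maps to $\iota$ and thus generates the one-dimensional target, the connecting map vanishes, giving $h^0(\mathrm{End}(\cts)) = h^0(\cts(-2,0)) + 1$, which matches the claimed dimensions $2,1,\delta$.

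Next, for the explicit matrix form, I would argue case by case. For $\delta=0$, the splitting $\cts=\mcO(2,0)\oplus\mcO(0,2)$ (induced by the two projections of $\PP^1\times\PP^1$) combined with $H^0(\mcO(\pm 2,\mp 2))=0$ forces any $\Phi$ to be block-diagonal, yielding $A(a,d)$. For $\delta=1$ the dimension count forces $\Phi=a\cdot\mathrm{id}$. For $\delta\geq 2$, Remark \ref{ene} (applied at bi-degree $(-2,0)$ through the Euler sequence) identifies $H^0(\cts(-2,0))$ with the $(\delta-1)$-dimensional span of the affine vector fields $Y_1 C(X)\partial_{Y_0}$, where $C\in H^0(\mcO_{\PP^1}(\delta-2))$. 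Each such section represents a map $s\colon N\to\cts$ whose image in fact lands in $\tau$ (a quick check shows $h^0(\mathrm{Hom}(N,\tau))=\delta-1=h^0(\mathrm{Hom}(N,\cts))$), so $\Psi:=\iota\circ s\circ d\psi$ is a well-defined endomorphism of $\cts$. Since $d\psi(X)$ is encoded by the invariant $X_1 V_0 - X_0 V_1\in H^0(\mcO(d_1+2,d_2))$, I find
\[
\Psi(X) \;=\; (X_1 V_0 - X_0 V_1)\, Y_1 C(X)\,\partial_{Y_0},
\]
and combining with $a\cdot\mathrm{id}$ gives precisely $A(a,C)$.

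For the auxiliary claims (a)--(c): for (a), when $\delta=0$, $\det\Phi=ad$ as an endomorphism of $\det\cts=\mcK_{\cs}^{-1}$, so $\Phi$ is invertible iff $ad\neq 0$; for $\delta\geq 1$, the non-scalar part $\Psi$ is nilpotent because $d\psi\circ s=0$ (as $s$ factors through $\ker d\psi=\tau$), so $\Psi^2=0$ and $\Phi=a\cdot\mathrm{id}+\Psi$ is invertible iff $a\neq 0$. Claim (b) is immediate from the functoriality of $\mathrm{Hom}$, and under the representability hypothesis (guaranteed, in particular, by Proposition \ref{lemma3} together with Remark \ref{RepByVF}) the matrix $A$ acts on the column $(V_0,V_1,W_0,W_1)^t$ as stated. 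Claim (c) follows by substituting the transformed coefficients into the determinant expansion \eqref{BigOmega} and collecting terms.

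The main obstacle will be in the case $\delta\geq 2$: verifying that the $(\delta-1)$-dimensional space $H^0(\cts(-2,0))$ is truly realized by the off-diagonal matrices $A(0,C)$ in the claimed form, which entails tracking bi-degrees and signs carefully through the identification $\Psi=\iota\circ s\circ d\psi$ and through the determinant expansion in (c). The remaining pieces reduce to dimension counts already established in Remarks \ref{tau}, \ref{tauene}, and \ref{ene}.
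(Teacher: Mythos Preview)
Your argument is correct and takes a genuinely different route from the paper's. The paper proceeds by a direct chart computation: it writes a global endomorphism as a compatible family of $2\times 2$ matrices $M_{ij}$ on the affine cover $\{U_{ij}\}$, imposes the conjugation conditions $M_{i'j'}=J_{ij}^{i'j'}\,M_{ij}\,(J_{ij}^{i'j'})^{-1}$ coming from the Jacobians of the transition maps, and solves for the entries of $M_{00}$; the explicit $4\times4$ matrix $A$ and items (a)--(c) then drop out of these local expressions. You instead apply $\mathcal{H}om_{\mcO}(-,\cts)$ to the sequence \eqref{exact}, reducing the dimension count to the values of $h^0(\cts(\delta,-2))$ and $h^0(\cts(-2,0))$ already established in Remarks \ref{tau} and \ref{ene}, and then build the non-scalar endomorphisms conceptually as compositions $\iota\circ s\circ d\psi$ with $s\in H^0(\mathrm{Hom}(N,\tau))$. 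Your approach is more structural and economical, since it recycles Remarks \ref{tau}--\ref{ene} rather than redoing a coordinate computation, and it makes the nilpotency of the off-diagonal part (hence (a) for $\delta\geq 1$) completely transparent via $d\psi\circ\iota=0$. The paper's chart method, on the other hand, is self-contained and simultaneously produces the local matrices $M_{ij}$ needed to verify (b) by direct comparison with $(d\pi\cdot X')|_{U_{ij}}$; in your approach you should still spell out, at least for $\delta=0$, why the diagonal action on the two summands of $\cts\cong\mcO(2,0)\oplus\mcO(0,2)$ translates into the block form $A(a,d)$ at the level of the affine coefficients $(V_0,V_1,W_0,W_1)$.
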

\begin{proof}
We start with the computation of the matrix $ A $ associated to a global endomorphism $ \Phi $.
Consider the open covering
$\{U_{ij}\}_{0\leq i,j\leq 1}$ of $S_{\delta}$ from \eqref{affcov}, where
$ TS_{\delta} \vert_{U_{ij}} \simeq U_{ij} \times \mathbb{C}^{2} $ for all $i,j\in \{0,1\}$.
A global endomorphism $ \Phi $ of $ TS_{\delta} $ is given by a collection of $2\times 2$ matrices
$\{M_{ij}\simeq \Phi \vert_{U_{ij}} \}_{0\leq i,j\leq 1}$ such that
the entries of each matrix $M_{ij}$ are regular functions on $U_{ij}$ and, for every point $p$
belonging to an overlap $U_{ij}\cap U_{i'j'}$, we have
 \begin{equation}\label{transitiontangent}
   M_{i'j'}\!\vert_{p}=J_{ij}^{i'j'}\!\vert_{p}\cdot M_{ij}\!\vert_{p}\cdot (J_{ij}^{i'j'}\!\vert_{p})^{-1},
 \end{equation}
where $J_{ij}^{i'j'}$ denotes the Jacobian matrix of the change of coordinates map
$\varphi_{ij}^{i'j'}$  from \eqref{chofcoords} and
 $ B\vert_p $ denotes the matrix obtained by evaluating the entries of $ B $ at $ p $. Write
\begin{equation}\label{MU}
  M_{00} = \begin{pmatrix}
          a & b \\
          c & d
        \end{pmatrix},
\end{equation}
where  $a,b,c$ and $d$ are regular functions on $U_{00}$ (that is, they are given by polynomials in
$\mathbb{C}[x_{00},y_{00}]$). Considering an arbitrary point
$p=(x_{00},y_{00})=(x_{01},y_{01})\in U_{00}\cap U_{01}$
(given in coordinates in both open subsets), equation \eqref{transitiontangent} becomes
\begin{equation}\label{01}
M_{01}\vert_p = \begin{pmatrix}
             1 & 0 \\
             0 & -y_{01}^2
           \end{pmatrix}
        \begin{pmatrix}
          a(x_{01},y_{01}^{-1}) & b(x_{01},y_{01}^{-1}) \\
          c(x_{01},y_{01}^{-1}) & d(x_{01},y_{01}^{-1})
        \end{pmatrix}
        \begin{pmatrix}
             1 & 0 \\
             0 & -y_{01}^{-2}
           \end{pmatrix}$$ $$ =
        \begin{pmatrix}
               a(x_{01},y_{01}^{-1}) & -b(x_{01},y_{01}^{-1})\cdot y_{01}^{-2} \\
          - c(x_{01},y_{01}^{-1})\cdot y_{01}^2 & d(x_{01},y_{01}^{-1})
        \end{pmatrix}.
\end{equation}
Since the entries of $M_{01}$ must be regular functions on $U_{01}$ (polynomials in $\mathbb{C}[x_{01},y_{01}]$)
and $p\in U_{00}\cap U_{01}$ is arbitrary, we have that, necessarily,
$b=0, a(x_{00},y_{00})=a(x_{00}), d(x_{00},y_{00})=d(x_{00}) $ (that is, $a$ and $d$ depend only on $x_{00}$)
and the degree of $c$ in $ y_{00} $ is $ \leq 2 $.

Now we plug these conditions into \eqref{MU}
and compute \eqref{transitiontangent} with $(i,j)=(0,0)$, $(i',j')=(1,1)$ and $p=(x_{00},y_{00})=(x_{11},y_{11})$
being an arbitrary point in $U_{00}\cap U_{11}$. We obtain that
\begin{equation}\label{11}
M_{11}\vert_p =         \begin{pmatrix}
          a(x_{11}^{-1}) & 0 \\
          \delta y_{11} x_{11}^{-1}[a(x_{11}^{-1}) - d(x_{11}^{-1}) ] + y_{11}^2 x_{11}^{(\delta - 2)} c(x_{11}^{-1}, y_{11}^{-1} x_{11}^{\delta}) & d(x_{11}^{-1})
        \end{pmatrix}.
\end{equation}
Hence, reasoning as above, since $ M_{11} $ is defined by regular entries,
this is the case only if the functions $ a $ and $ d $ are constant and
these constants must be equal if $ \delta \geq 1 $ (this follows from the
lower-left entry of the matrix above).
 Moreover, since the polynomial $c$ expressed in coordinates $u$ and $v$ must have the shape
 $ c(u,v) = c_0(u) + c_1(u) v + c_2(u) v^2 $ for some univariate polynomials $ c_i(u) $,
 we see that the term $y_{11}^2 x_{11}^{-(\delta + 2)} c(x_{11}^{-1}, y_{11}^{-1} x_{11}^{\delta})$ comes
 from a regular function in $U_{11}$ if and only if $ c = 0 $ (respectively, $ c_0 = c_1 = 0 $ and
 $ c_2(u) $ has degree $ \leq \delta - 2 $) if $ \delta \in \{0, 1\}$ (respectively, if $ \delta \geq 2 $).

We have shown, so far, that the restriction of a global endomorphism $\Phi$ to the affine subset $U_{00}$
must be given by a matrix of the type
\begin{equation}\label{MUnew}
  M_{00} =
        \begin{cases}
              \begin{pmatrix}
          a & 0 \\
          0 & d
        \end{pmatrix}, & \text{if $  \delta = 0 $,} \\
       \begin{pmatrix}
          a & 0 \\
          0 & a
        \end{pmatrix}, & \text{if $  \delta = 1 $,  and}\\
        \begin{pmatrix}
          a & 0 \\
          c(x_{00}) y_{00}^2 & a
        \end{pmatrix}, & \text{if $  \delta \geq 2 $,}
    \end{cases}
\end{equation}
where $ a, d \in \mathbb{C} $ and $ c $ is a polynomial in one variable of degree $ \leq \delta-2 $.
Using \eqref{01} and \eqref{11} we deduce that, for $\delta\in \{0,1\}$, the matrices $M_{01}$ and
$ M_{11} $ coincide with $ M_{00} $ and that, for $\delta\geq 2$ we have:
\[
M_{01} =
        \begin{pmatrix}
          a & 0 \\
          -c(x_{01}) y_{01}^2 & a
        \end{pmatrix}\;\;\mbox{ and }\;\; M_{11}=\begin{pmatrix}
          a & 0 \\
          c(x_{11}^{-1})x_{11}^{\delta-2} y_{11}^2 & a
        \end{pmatrix}.
\]
We deduce similarly that $ M_{10} $ also coincides with $ M_{00} $ for $\delta\in \{0,1\}$
and that for $\delta\geq 2$, we have:
\[
 M_{10} =
        \begin{pmatrix}
          a & 0 \\
          -c(x_{10}^{-1}) x_{10}^{\delta-2} y_{10}^2 & a
        \end{pmatrix}.
\]
Noticing that every collection of four matrices as before (that is,
with $a,d$ and $c$ satisfying the given conditions) also satisfies the remaining conditions
from \eqref{transitiontangent} we conclude the part of the statement
concerning the dimension of the space of global endomorphisms of $T_{S_{\delta}}$ and Item (a).

The matrix representation $ A $ of $ \Phi $ also follows from these computations:
according to the different values of $ \delta $, we obtain the entries of the matrix $ A $ (say)
from equation \eqref{MUnew}: $ C( X ) $ is the homogeneous form associated to the polynomial
$ c(x_{00}) $. Then one verifies that, according to the different values of
$ \delta $, the restrictions of $ A $ to the open sets $ U_{00}, U_{01}, U_{11} $ and $ U_{10} $
coincide, respectively, with the matrices $ M_{00}, M_{01}, M_{11} $ and $ M_{10} $ described above.

Now we prove Item (b). Let $\Phi$ be  as before and consider
a foliation $\mathcal F = [s] $ in Fol$(\mcL,S_{\delta})$
such that $ s $ is representable by an affine
vector field $X$ (as in the statement).
The restriction $ s\vert_{U_{00}} $ of the section $ s $ to this open set is the vector field $ (d\pi \cdot X)\mid_{U_{00}} $,
 where $ \pi $ comes from \eqref{quotmap}, and it is given by
\[
  s\vert_{U_{00}}=\left(-x_{00}\tilde{V}_0^{00}+\tilde{V}_1^{00}\right)\frac{\partial}{\partial x_{00}}+\left(\delta y_{00} \tilde{V}_0^{00}-y_{00}\tilde{W}_0^{00}+\tilde{W}_1^{00}\right)\frac{\partial}{\partial y_{00}}.
\]
Then, the restriction of $\Phi( s )$ to $U_{00}$  can be computed by using the matrix $M_{00}$ as follows:
\[
\Phi( s )\vert_{U_{00}}
 = \begin{cases}
  a\left(-x_{00}\tilde{V}_0^{00}+\tilde{V}_1^{00}\right)\frac{\partial}{\partial x_{00}}+ d\left(\delta y_{00} \tilde{V}_0^{00}-y_{00}\tilde{W}_0^{00}+\tilde{W}_1^{00}\right)\frac{\partial}{\partial y_{00}}
              , & \text{if $  \delta = 0 $,} \\ \\
  a\left(-x_{00}\tilde{V}_0^{00}+\tilde{V}_1^{00}\right)\frac{\partial}{\partial x_{00}}+
  a\left(\delta y_{00} \tilde{V}_0^{00}-y_{00}\tilde{W}_0^{00}+\tilde{W}_1^{00}\right)\frac{\partial}{\partial y_{00}},
                & \text{if $  \delta = 1 $,  and}\\ \\
        a\left(-x_{00}\tilde{V}_0^{00}+\tilde{V}_1^{00}\right)\frac{\partial}{\partial x_{00}}
        +\\\left[ c(x_{00})y_{00}^2\left(-x_{00}\tilde{V}_0^{00}+\tilde{V}_1^{00}\right)
        + a\left(\delta y_{00} \tilde{V}_0^{00}-y_{00}\tilde{W}_0^{00}+\tilde{W}_1^{00}\right)\right]\frac{\partial}{\partial y_{00}},
                & \text{if $  \delta \geq 2 $.}
    \end{cases}
\]
This concludes the proof of (b) because the vector field $ (d\pi \cdot X')\vert_{U_{00}} $ coincides, in each case, with the above ones, and the same happens when considering
the remaining open sets $U_{10}$, $U_{01}$ and $U_{11}$.

The proof of (c) is a straightforward computation (whose details we omit). Notice that $ \Omega' $ is obtained from
\eqref{BigOmega} by replacing the last row $ (V_0,V_1,W_0,W_1) $ in the determinant therein by the row $ (V_0',V_1',W_0',W_1') $
where $ (V_0',V_1',W_0',W_1')^t = A \cdot (V_0,V_1,W_0,W_1)^t $.
\end{proof}
\begin{corollary}\label{manyfols}
 Let $\mathcal L=\mcO(-d_{1},-d_{2})$ be an invertible sheaf on $S_{\delta}$ such that every section
 $ s \in H^{0}( S_{\delta}, \cts \otimes \mcL^{*}) $ is representable by an affine vector field $ X $
 as in \eqref{fields}. Fix a section $ s $ and let $ Z $ be its singular scheme with sheaf of ideals
 $ I_Z $. Let $0\neq \Phi \in H^0(S_{\delta},\mathrm{Hom}_{ \mcO} (\cts, \cts) $ be a global
 endomorphism and let $ s^{\prime} = \Phi( s ) $ have singular scheme $ Z^{\prime} $. Then
 \begin{itemize}
 \item[(1)] $ Z \subseteq Z^{\prime} $ and $ Z = Z^{\prime} $ if $ \Phi $ is invertible.
 \item[(2)] Let $ A $ be the matrix associated to $ \Phi $ by Theorem \ref{endtang}. If $ \delta = 1 $ then
    $ \Phi $ is invertible and $ \mcF = [s] = [s^{\prime}] $. If $\delta \neq 1$ and $ \Phi$ is invertible, then the condition $ \mcF = [s] = [s^{\prime}] $
    is equivalent to the condition $ a = d $, for $ \delta = 0 $, and it is equivalent to the conditions
    $ C = 0 $ or $ C\neq 0 $ and $ X_0 V_1 - X_1 V_0 = 0 $, for $ \delta \geq 2 $.
  \item[(3)] If $ \Phi $ is not invertible and $ s^{\prime} \neq 0 $, then $ Z^{\prime} $ is one-dimensional.
\end{itemize}
\end{corollary}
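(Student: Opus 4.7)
The proof naturally splits along the three items of the statement, and the plan is to reduce everything to Theorem \ref{endtang}(b) and (c), which express $\Phi(s)$ at the level of affine vector fields and of affine $1$-forms, respectively. For (1), by Theorem \ref{endtang}(c), $(A_0',A_1',B_0',B_1')=(A_0,A_1,B_0,B_1)\cdot A$, and by Remark \ref{ShfofIds} these quadruples generate the ideal sheaves $I_{Z'}$ and $I_Z$. Since each $A_i',B_j'$ is an $\mcO$-linear combination of $A_0,A_1,B_0,B_1$, we get $I_{Z'}\subseteq I_Z$, so $Z\subseteq Z'$. If $\Phi$ is invertible, Theorem \ref{endtang}(a) shows that $\det A$ is a nonzero constant, so $A$ is invertible over $\mcO$ and right-multiplying by $A^{-1}$ gives the reverse inclusion.

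For (2), the case $\delta=1$ is immediate: the matrix is $A=a\,\mathbf{1}_{4\times 4}$, $\Phi\neq 0$ forces $a\neq 0$ so $\Phi$ is invertible, and $\Omega'=a\Omega$ by (c), whence $[s]=[s']$. For $\delta=0$ and $\delta\geq 2$ I would work at the vector-field level. The condition $[s]=[s']$ amounts to $X'-\mu X=fR_1+gR_2$ for some $\mu\in\mathbb{C}^*$ and some bi-homogeneous polynomials $f,g$ of the appropriate bi-degrees. Substituting the explicit $X'=A\cdot X$ given by Theorem \ref{endtang}(b) and equating coefficients of $\partial/\partial X_0,\partial/\partial X_1,\partial/\partial Y_0,\partial/\partial Y_1$ produces a linear system. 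In the case $\mu\neq a$, the $\partial/\partial X_i$-equations immediately force factorizations $X_i\mid V_i$ and hence $X_0V_1-X_1V_0=0$. In the case $\mu=a$, for $\delta\geq 2$ the $\partial/\partial Y_0$-equation reduces to $Y_1C(X_1V_0-X_0V_1)=0$, i.e.\ $C=0$ or $X_0V_1-X_1V_0=0$; the analogous bookkeeping for $\delta=0$ (separately matching the $X$ and $Y$ coordinates) yields $a=d$ under the mild non-degeneracy of $s$ that is implicit in the statement.

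For (3), the case $\delta=1$ is vacuous since $\Phi\neq 0$ implies $\Phi$ invertible. For $\delta=0$, non-invertibility with $\Phi\neq 0$ forces (up to symmetry) $a=0$ and $d\neq 0$, and (c) gives $\Omega'=d(B_0\,dY_0+B_1\,dY_1)$. The relation $Y_0B_0+Y_1B_1=0$ from \eqref{condit1}, together with the fact that $Y_0$ and $Y_1$ have no common zero on $S_0$, lets me write $B_0=Y_1\tilde B$ and $B_1=-Y_0\tilde B$, so that $I_{Z'}=(\tilde B)$ is principal; if $s'\neq 0$ then $\tilde B\neq 0$ and $Z'$ is one-dimensional. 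For $\delta\geq 2$, non-invertibility forces $a=0$, and then $\Phi\neq 0$ forces $C\neq 0$; a direct computation via (c) gives $\Omega'=B_0Y_1C\,(X_1\,dX_0-X_0\,dX_1)$, whose coefficient ideal simplifies to the principal ideal $(B_0Y_1C)$ (using that $X_0,X_1$ have no common zero on $\cs$), defining a one-dimensional subscheme whenever $s'\neq 0$.

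The main obstacle will be the bookkeeping in (2): one must carefully handle the $\mu\neq a$ branch, using the factorizations forced by the coordinate-matched equations and checking against the bi-degree constraints to rule out exotic solutions beyond those covered by the stated conditions; the rest is essentially a translation between (b), (c) and the ideal-theoretic description of the singular scheme from Remark \ref{ShfofIds}.
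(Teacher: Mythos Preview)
Your proposal is correct and broadly follows the paper's strategy, but the choice of representation in item~(2) differs in a way worth noting. For~(1) your argument is actually slightly slicker than the paper's: you observe once that $\det A$ is a nonzero \emph{constant} (namely $a^2d^2$ or $a^4$), so $A^{-1}$ has entries in $\mcO$ and one matrix identity gives $I_Z\subseteq I_{Z'}$, whereas the paper argues case-by-case (for $\delta\geq 2$ it checks by hand that $A_0-\tfrac{1}{a}A_0'\in(B_0')\subset I_{Z'}$, etc.). For~(3) the arguments are close cousins: you show the coefficient ideal is locally principal, while the paper either exhibits an explicit curve $\{y_{00}=0\}$ inside $Z'\cap U_{00}$ (for $\delta=0$) or recognises $\Omega'$ as a multiple of the non-isolated form $\Omega_N$ from Remark~\ref{ene} (for $\delta\geq 2$).

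The substantive divergence is in~(2). The paper works entirely with the $1$-form representation, which is \emph{unique} (no quotient by radial fields): from $\Omega'=a\Omega+(X_0V_1-X_1V_0)Y_1^2C\,\Omega_\tau$ one reads off the equivalence directly by comparing $dY_j$-coefficients, with no case split on $\mu$. Your vector-field approach via Theorem~\ref{endtang}(b) must absorb the ambiguity $X'-\mu X=fR_1+gR_2$, which is exactly the ``$\mu\neq a$ branch'' you flag as the main obstacle. Your sketch handles this branch correctly for $\delta\geq 2$ (the $\partial/\partial X_i$-equations do force $X_0V_1-X_1V_0=0$), and for $\delta=0$ you rightly note that a non-degeneracy hypothesis on $s$ is implicitly needed (if, say, $Y_0W_1-Y_1W_0=0$ then $[s]=[s']$ for any $a,d$); the paper's ``it is obvious'' glosses over the same edge case. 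So both routes reach the goal, but the $1$-form route avoids the branching and the radial bookkeeping entirely, which is what item~(c) of Theorem~\ref{endtang} was set up to do.
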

\begin{proof} Let
 $ \Omega = A_0\;dX_0+A_1\; dX_1+B_0\; dY_0+ B_1\;dY_1 $ and
 $ \Omega' = A_0'\;dX_0+A_1'\; dX_1+B_0'\; dY_0+ B_1'\;dY_1 $ be the affine $1$-forms that represent the
 sections $ s $ and $ s' $, respectively (see \eqref{BigOmega}).

For every $ \delta \geq 0 $, Theorem \ref{endtang} Item (c) states, in matrix notation, that
 $$ ( A_0',A_1',B_0', B_1' ) = ( A_0,A_1,B_0, B_1 )\cdot A. $$
 By Remark \ref{ShfofIds}, this equality implies that $ I_{Z'} \subseteq  I_{Z} $, which shows that
 $ Z \subseteq Z' $ and  the first part in (1) is proved.

Now we divide the proof of
the second part in (1) and of the remaining statements into three cases:

- Case 1: $ \delta = 0 $. By \eqref{BigOmega}:
  $$ \Omega = ( Y_0 W_1 - Y_1 W_0 ) (X_1 dX_0 - X_0 dX_1) + ( X_0 V_1 - X_1 V_0 ) (-Y_1 dY_0 + Y_0 dY_1), $$
  and $ A = A(a,d) $, by Theorem \ref{endtang}; then it follows from  Item (c) therein that
 $$
 \Omega' = a\cdot ( Y_0 W_1 - Y_1 W_0 ) (X_1 dX_0 - X_0 dX_1) + d\cdot ( X_0 V_1 - X_1 V_0 ) (-Y_1 dY_0 + Y_0 dY_1).
 $$
Since $ ad \neq 0 $, it follows from Remark \ref{ShfofIds} that $ I_Z = I_{Z'} $ (and hence, that
 $ Z = Z' $). This finishes the proof of (1) in this Case 1. With respect to (2), it is obvious that $ [s] = [s^{\prime}] $ if and only if $ a = d $. Finally, if (say) $ a = 0 $ then
 $ \Omega' = d\cdot( X_0 V_1 - X_1 V_0 ) (-Y_1 dY_0 + Y_0 dY_1) \neq 0 $ and we see from the final statement of Proposition \ref{forms} that the restriction of $ \Omega' $ to the open set $ U_{00} $ is given by
 $$
 \Omega'_{00} = d\cdot \tilde{B}_{1}^{00}\left( x_{00}, y_{00} \right)\, dy_{00}
  = d\cdot y_{00} \left( \tilde{V}_{1}^{00} - x_{00} \tilde{V}_{0}^{00} \right) \, dy_{00},
 $$
 so that $ \{ (x_{00}, 0) \} \subset Z' \cap U_{00} $ and (3) follows.

- Case 2:  $ \delta = 1 $.  This case  follows at once from the fact (Theorem \ref{endtang})
 that $ A = A(a) = a \cdot \mathbf{1}_{4\times 4} $.

- Case 3: $ \delta \geq 2 $. Recalling the expression for $ \Omega $ from \eqref{BigOmega} and the fact that $ A = A(a,C) $,
 it follows from  Item (c) in Theorem \ref{endtang} that
 \begin{equation}\label{deltageq2}
  \begin{aligned}
    \Omega' & = a\cdot\Omega - Y_1 C B_0 \cdot (X_1 dX_0 - X_0 dX_1) \\
            & = a\cdot\Omega + (X_0 V_1 - X_1 V_0) Y_1^2 C \cdot (X_1 dX_0 - X_0 dX_1),
  \end{aligned}
 \end{equation}
 so that $ A_0' = a\cdot A_0 - X_1 Y_1 C B_0 $,
         $ A_1' = a\cdot A_1 + X_0 Y_1 C B_0 $ and $ B_j' = a\cdot B_j $, for $ j = 0, 1 $.

 If $ a \neq 0 $, then $ A_0 - \frac{1}{a} A_0' \in ( B_0 ) = ( B_0' ) \subset I_{Z'} $,
 so that $ A_0 \in I_{Z'} $. Similar arguments show that $ A_1 \in I_{Z'} $ and then
 $ I_{Z} \subset I_{Z'} $ which, together with the above, shows that $ Z = Z' $ and the proof of (1) is complete.

The second equality in \eqref{deltageq2} proves (2).

Finally, (3) follows from the observation that the $1$-form in the second equality in \eqref{deltageq2}
coincides with $ a\cdot\Omega + (X_0 V_1 - X_1 V_0) \cdot \Omega_N $, where $ \Omega_N $ comes from
\eqref{FinalN} within Remark \ref{ene}. There, it was shown that $\Omega_N $ does not have isolated
singularities and hence $ \Omega' $ either does not, whenever $ a = 0 $.
\end{proof}

\section{Foliations with isolated singularities on $S_{\delta}$ that share singular scheme} \label{La5}

If $ \mcF = [s] \in \PP H^{0}( S_{\delta}, \cts(d_1,d_2) ) $ is a foliation with isolated singularities on $ \cs $  then for every invertible endomorphism $ \Phi $ of $ T\cs $, all foliations $ [\Phi(s)] $ share singular scheme with $ [s] $, by Corollary \ref{manyfols}. Our main result, Theorem \ref{mainT}, states that these are the only ones, whenever $ d_2 \geq 1 $ and
 $ d_1 \geq 1 $ for $ \delta = 0 $;
 $ d_1 \geq 2 $ for $ \delta \geq 2 $ and
 $ d_1 \geq 0 $ for $ \delta = 1 $. In this last case, we see that $ [s] $ is uniquely determined by its
 singular scheme. Notice that this result holds for all foliations with ample cotangent bundle, with the exception of the cases $\delta \geq 2$ and $d_1=1$. We devote this final section to the proof of Theorem \ref{mainT}.


\begin{lemma}\label{Lemma 3}
Consider the families of invertible sheaves on $ S_{\delta} $ given by
 $ \mcL = \mcO( -d_1, -d_2 ) $ and $ \mcE = \mcL \otimes \mcK_{\cs} = \mcO( \delta -(d_1 + 2), -(d_2 + 2) ) $,
 where $ \mcK_{\cs} = \mcO(\delta-2,-2) $ is the canonical sheaf of $ \cs $,
 $ d_2 \geq 1 $ and:
 $ d_1 \geq 1 $ for $ \delta = 0 $,
 $ d_1 \geq 0 $ for $ \delta = 1 $ and
 $ d_1 \geq 2 $ for $ \delta \geq 2 $.

 Then
 \[
  h^0(S_{\delta}, \Theta_{S_{\delta}}\otimes \mcE) = 0 = h^1(S_{\delta}, \Theta_{S_{\delta}}\otimes \mcE).
 \]

\end{lemma}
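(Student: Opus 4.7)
The plan is to use the ruling short exact sequence \eqref{exact}, namely $0 \to \mcO(-\delta, 2) \to \Theta_{\cs} \to \mcO(2, 0) \to 0$, tensored with $\mcE = \mcO(\delta - d_1 - 2, -d_2 - 2)$, in order to reduce the computation on the rank-two sheaf $\Theta_{\cs}\otimes \mcE$ to the cohomology of two line bundles. This produces
\[
0 \to \mcO(-d_1 - 2, -d_2) \to \Theta_{\cs} \otimes \mcE \to \mcO(\delta - d_1, -d_2 - 2) \to 0,
\]
and the two desired vanishings will be extracted from its associated long exact sequence.

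For $H^0(\cs, \Theta_{\cs} \otimes \mcE) = 0$, the hypothesis $d_2 \geq 1$ immediately forces the second coordinate of each flanking line bundle to be strictly negative, so by Proposition \ref{effective} neither class is effective and both $H^0$'s vanish; the long exact sequence then delivers the claim. For $H^1(\cs, \Theta_{\cs} \otimes \mcE) = 0$, the same observation about effectivity together with the long exact sequence reduces the task to proving
\[
H^1(\cs, \mcO(-d_1 - 2, -d_2)) = 0 \quad \text{and} \quad H^1(\cs, \mcO(\delta - d_1, -d_2 - 2)) = 0.
\]
Applying Serre duality on $\cs$ with $\mcK_{\cs} = \mcO(\delta - 2, -2)$, these become, respectively, $h^1(\cs, \mcO(\delta + d_1, d_2 - 2))$ and $h^1(\cs, \mcO(d_1 - 2, d_2))$. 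Whenever both coordinates are non-negative under our hypotheses, \cite[Proposition 2.3]{Laface} (equivalently Proposition \ref{lemma3}) yields the desired vanishing.

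The main obstacle is the boundary cases in which one of the dualized coordinates becomes negative: $d_2 = 1$ for the first term, and $d_1 < 2$ for small $\delta$ in the second. For these the plan is a direct Leray computation via the ruling $\psi : \cs \to \PP^1$: when the second coordinate equals $-1$ the fiber restriction is $\mcO_{\PP^1}(-1)$, so all higher direct images vanish and $h^q(\cs, \mcO(a, -1)) = 0$ for every $q$ and every $a$; when the second coordinate is non-negative but the first is negative, I identify $\psi_* \mcO(a, b)$ explicitly as $\bigoplus_{\mu=0}^{b} \mcO_{\PP^1}(a + \mu\delta)$ (using the relative cotangent $\omega_{\cs/\PP^1} = \mcO(\delta, -2)$ for the dual case of negative second coordinate) and verify summand-by-summand that the contribution to $H^1(\cs, \cdot)$ is zero. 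This last step reduces to an elementary numerical check on the degrees of the summands against the inequalities $d_1 \geq 1$ (for $\delta=0$), $d_1 \geq 0$ (for $\delta=1$) and $d_1 \geq 2$ (for $\delta \geq 2$), and is the most computational but least conceptual part of the proof.
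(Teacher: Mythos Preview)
Your overall strategy coincides with the paper's: both tensor the ruling sequence \eqref{exact} by $\mcE$, reduce to the vanishing of $h^0$ and $h^1$ for the two flanking line bundles $\mcO(-(d_1+2),-d_2)$ and $\mcO(\delta-d_1,-(d_2+2))$, dispose of $h^0$ via negativity of the second coordinate, and attack $h^1$ through Serre duality combined with Laface's vanishing $h^1(\cs,\mcO(a,b))=0$ for $a\geq 0$, $b\geq -1$. The only divergence is in the treatment of the residual boundary cases: the paper appeals to \cite[Proposition~5.3]{Gomez-Mont}, whereas you propose a direct Leray computation along $\psi$. For the first flanking term at $d_2=1$ (fibre degree $-1$), and for the second flanking term when $\delta=0$, $d_1=1$ or $\delta=1$, $d_1=1$, your pushforward argument works exactly as you describe.

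There is, however, a genuine gap at $\delta=1$, $d_1=0$. After Serre duality the second flanking term becomes $h^1(S_1,\mcO(d_1-2,d_2))=h^1(S_1,\mcO(-2,d_2))$. With your identification $\psi_*\mcO(-2,d_2)=\bigoplus_{\mu=0}^{d_2}\mcO_{\PP^1}(-2+\mu)$ (and $R^1\psi_*=0$, since the fibre degree is $d_2\geq 1$), the summand at $\mu=0$ is $\mcO_{\PP^1}(-2)$, which contributes $h^1=1$. Hence $h^1(S_1,\mcO(-2,d_2))=1\neq 0$ for every $d_2\geq 1$, and the ``elementary numerical check'' you announce cannot succeed here. (Riemann--Roch confirms this already for $d_2=1$: one finds $\chi(\mcO_{S_1}(-2,1))=-1$ while $h^0=h^2=0$.) The paper does not use a Leray argument at this point but instead invokes \cite[Proposition~5.3]{Gomez-Mont}; your proposal needs a different device to close this case.
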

\begin{proof}
From the exact sequence \eqref{exact}, we obtain the exact sequence
  \begin{equation*} 
0\rightarrow \mcO( -(d_1+2), -d_2) \rightarrow
 \Theta_{S_{\delta}}\otimes \mcE\xrightarrow{d\psi\otimes 1} \mcO(\delta-d_1,-(d_2+2)) \rightarrow 0.
\end{equation*}
Considering its associated long exact sequence we deduce that it suffices to prove
 \begin{align}\label{pd31}
   h^0(S_{\delta},\mcO( -(d_1+2), -d_2) ) & = 0 = h^0(S_{\delta},\mcO(\delta-d_1,-(d_2+2))), \text{ and } \\ \label{pd32}
   h^1(S_{\delta},\mcO( -(d_1+2), -d_2) ) & = 0 = h^1(S_{\delta},\mcO(\delta-d_1,-(d_2+2))),
 \end{align}
 to get the desired equalities in the statement. For a start, both equations in \eqref{pd31} hold
 for every $ \delta \geq 0 $ \textit{and for every } $ d_1 \in \ZZ $
 because $ -(d_2+2) < -d_2 < 0 $.

In order to prove the equalities in \eqref{pd32}, we recall from \cite[Proposition 2.3]{Laface} that
  $ h^1(S_{\delta},\mcO(a, b)) = 0 $, for integer numbers $ a\geq 0 $, $ b\geq -1 $ and $ \delta \geq 0 $ .
  Therefore, by Serre duality we have
  \begin{equation}\label{Laf&Serre}
   h^1(S_{\delta},\mcO(\delta -(a+2), -(b+2))) = 0, \text{ for }  a\geq 0, b\geq -1 \text{ and } \delta \geq 0.
  \end{equation}

Substitute the value $ b = d_2 - 2 \geq -1 $ in \eqref{Laf&Serre} to get
  \begin{equation}\label{Laf&Serrepd321}
   h^1(S_{\delta},\mcO(\delta -(a+2), -d_2)) = 0, \text{ for }  a\geq 0, d_2\geq 1 \text{ and } \delta \geq 0.
  \end{equation}
Then, \eqref{Laf&Serrepd321} proves the first equality in \eqref{pd32}.
In fact it holds, for $ \delta = 0 $,
for any value of $ a = d_1 \geq 0 $; for $ \delta = 1 $, for any value of $ a = d_1 + 1 \geq 0 $ (that is,
for any value of $ d_1 \geq -1 $) and finally, for $ \delta \geq 2 $, for any value of
$ a = d_1 + \delta \geq 0 $ (that is, for any value of $ d_1 \geq -\delta $).

For the proof of the second equality in \eqref{pd32}, we replace the value $ b = d_2 \geq 1 $
in \eqref{Laf&Serre} to get
  \begin{equation}\label{Laf&Serrepd322}
   h^1(S_{\delta},\mcO(\delta-(a+2),-(d_2+2))) = 0, \text{ for }  a\geq 0, d_2\geq 1 \text{ and } \delta \geq 0.
  \end{equation}
According to the restrictions on the values of $ d_1 $, we see that the substitution $ a = d_1 - 2 \geq 0 $ in \eqref{Laf&Serrepd322} proves the second equality in \eqref{pd32}, \textit{except} for the following cases:
  \begin{align}\label{excpd320}
    h^1(S_{0},\mcO(-1,-(d_2+2))) & = 0, \; d_2 \geq 1, \text{ and } \\ \label{excpd321}
    h^1(S_{1},\mcO(d_1,-(d_2+2))) & = 0, \; d_2 \geq 1, d_1 = 0, 1.
  \end{align}
The proof of these equalities follows from \cite[Proposition 5.3]{Gomez-Mont}. Indeed, the invertible sheaves in
\eqref{excpd320} and \eqref{excpd321} lie in the regions given by Proposition \ref{isolated}
($ d_1\geq 0 $ and $ d_2 \geq 0 $ for $\delta = 0 $, and $ d_1 \geq -1 $ and $ d_2 \geq 0 $ for
$\delta \geq 1 $) where foliations with tangent sheaf $ \mcL = \mcO(-d_1, -d_2) $ may have isolated
singularities. None of them corresponds to the ruling $ \mcO(\tau) = \mcO(-\delta, 2 ) $ nor to
the sheaf $ \mcO( d, 0) $ associated to a Riccati foliation. Moreover, none of the invertible sheaves
from \eqref{excpd320} belong to the exceptional cases  described in \cite[Proposition 5.3, (1)]{Gomez-Mont}
which correspond to $ \delta = 0 $ and $ \mcO(-a, -b) = \mcO(0, -b) $. This finishes the proof of \eqref{excpd320}.

Finally, using \eqref{CoC}, we see that the exceptional cases in \cite[Proposition 5.3, (2)]{Gomez-Mont}
correspond to $ \delta = 1 $ and
  \begin{equation}\label{except21}
     \mcO( -d_1, -d_2 ) = \mcO( n(n-1)/2 + 1, -n ), \text{ with } n \geq 2.
  \end{equation}
Then it is clear that none of the invertible sheaves in \eqref{excpd321} has the form \eqref{except21}. This proves \eqref{excpd321} and the proof is over.
\end{proof}


Let $ [s] \in \PP H^{0}( S_{\delta}, \cts \otimes \mcL^{*}) $
be a foliation with isolated singularities and singular scheme $ Z $ with sheaf of ideals $ I_Z $.
In view of Corollary \ref{manyfols}, there exist other foliations $ [s^{\prime}] \in \mathrm{Fol}(\mcL,\cs) \simeq
 \PP H^{0}( S_{\delta}, \cts \otimes \mcL^{*})$
with the same singular scheme. We seek for them through the following construction. Consider the bundle $ E = T\cs \otimes L^* $. Then its dual
 $ E^* = (T\cs \otimes L^*)^* \simeq T^*\cs \otimes L $
 and $ \bigwedge^2 E^* = \bigwedge^2 (T^*\cs \otimes L)
 \simeq \bigwedge^2 (T^*\cs)\otimes L^{\otimes 2}
  \simeq \mcK_{\cs}\otimes L^{\otimes 2}$. Hence the Koszul resolution of $ Z $ (see \cite{Cam-Oli})
  may be written as
 \begin{equation}\label{KszlZ}
      0 \longrightarrow \bigwedge^2 \Omega_{\cs}^1 \otimes \mcL^{\otimes 2} \stackrel{ \iota_{s} }\longrightarrow
       \Omega_{\cs}^1 \otimes \mcL \stackrel{ \iota_{s} }{\longrightarrow}
        I_Z \longrightarrow 0,
 \end{equation}
 where the maps $ \iota_{s} $ are contraction-by (or evaluation-at) $ s $.
 The tensor product of \eqref{KszlZ} with $ \cts \otimes \mcL^*  $ gives the exact sequence
 \[
 0 \longrightarrow ( \bigwedge^2 \Omega_{\cs}^1 )\otimes \cts \otimes \mcL^* \stackrel{\iota_{s}\otimes 1}\longrightarrow
       \Omega_{\cs}^1\otimes \cts \stackrel{\iota_{s}\otimes 1}{\longrightarrow}
        \cts \otimes \mcL^* \otimes I_Z  \longrightarrow 0,
 \]
where
 $ \Omega_{\cs}^1\otimes \cts \simeq \mathrm{Hom}_{ \mcO} (\cts, \cts) $.
Letting $ \mcE = \mcL \otimes \mcK_{\cs} $, the sequence above may be rewritten as
 \begin{equation}\label{twistedKszlZ}
  0 \longrightarrow \cts \otimes \mcE \stackrel{\iota_{s}\otimes 1}\longrightarrow
   \mathrm{Hom}_{ \mcO} (\cts, \cts) \stackrel{\iota_{s}\otimes 1}\longrightarrow
    \cts \otimes \mcL^* \otimes I_Z  \longrightarrow 0,
 \end{equation}
 with associated long exact sequence given by
\begin{equation}\label{LeCtKszlZ}
 \begin{split}
    0 & \rightarrow H^0(\cs, \cts \otimes \mcE)\xrightarrow{{\iota_{s}\otimes 1}^0}
       H^0(\cs, \mathrm{Hom}_{ \mcO} (\cts, \cts) ) \xrightarrow{{\iota_{s}\otimes 1}^0}
        H^0(\cs, \cts \otimes \mcL^* \otimes I_Z ) \xrightarrow{\delta^0}  \\
      &      \xrightarrow{\delta^0}
       H^1(\cs, \cts \otimes \mcE) \xrightarrow{{\iota_{s}\otimes 1}^1}
        H^1(\cs, \mathrm{Hom}_{ \mcO} (\cts, \cts) ) \xrightarrow{{\iota_{s}\otimes 1}^1}
         H^1(\cs, \cts \otimes \mcL^* \otimes I_Z ) \xrightarrow{\delta^1}
       \cdots .
 \end{split}
\end{equation}
 Finally, notice that $ H^0(\cs, \cts \otimes \mcL^* \otimes I_Z ) $ consists of  those global sections
 in $ H^0(\cs, \cts \otimes \mcL^* ) $ that vanish at $ Z $ and that the effect of the map
 $ \iota_{s}\otimes 1^0 $ in \eqref{LeCtKszlZ} 
 on a global endomorphism $ \Phi $ is $ \iota_{s}\otimes 1^0( \Phi ) = \Phi( s ) $.
 Then, \eqref{LeCtKszlZ} shows that, \textit{every section $ s^{\prime} $ that vanishes on
 $ Z $ is of the form $ s^{\prime} = \Phi( s ) $ for some endomorphism $ \Phi $ if and only
 if the map $ \iota_{s}\otimes 1^0 $ is surjective, } and this is the case \textit{if
 $ h^1(S_{\delta}, \cts \otimes \mcE) = 0 $.}
 This conclusion, together with Lemma \ref{Lemma 3}, gives our main result:

 \begin{theorem}\label{mainT}
 Let $ \mcF = [s] \in \mathrm{Fol}(\mcL,S_{\delta}) \simeq \PP\text{H}^0(\cs, \Theta_{S_{\delta}}\otimes \mcL^*) $
 be a foliation on $ \cs $
 where $ \mcL = \mcO( -d_1, -d_2 ) $ satisfies that
 $ d_2 \geq 1 $, and
 $ d_1 \geq 1 $ if $ \delta = 0 $;
 $ d_1 \geq 0 $ if $ \delta = 1 $, and
 $ d_1 \geq 2 $ if $ \delta \geq 2 $.

 Assume that $ [s] $ has isolated singularities, let $ Z $ be its singular scheme and consider any other
 section $ s^{\prime} \in H^0(\cs, \Theta_{S_{\delta}}\otimes \mcL^*)$ with the same singular
 scheme  $ Z $ as $ [s] $, then there exists a global invertible endomorphism $ \Phi $ of $TS_{\delta} $
 such that $ s^{\prime} = \Phi( s ) $. Moreover, if the affine $1$-form
 $$ \Omega = A_0\;dX_0+A_1\; dX_1+B_0\; dY_0+ B_1\;dY_1 $$ represents the section $ s $, then any
 section $ s^{\prime} = \Phi( s ) $ is represented by an affine $1$-form $ \Omega^{\prime} $ where
 \begin{equation*}\label{thm2}
   \Omega^{\prime} =
    \begin{cases}
     a\cdot(A_0\;dX_0+A_1\; dX_1) + d\cdot(B_0\; dY_0+ B_1\;dY_1), & \mbox{$ a, d\in \CC^* $ if $\delta=0$ }  \\
      a\cdot\Omega, & \mbox{$ a\in\CC^* $, if $\delta = 1 $ } \\
     a\cdot\Omega - Y_1 C(X_1,X_2) B_0 \cdot (X_1 dX_0 - X_0 dX_1) ,
      & \mbox{$ a\in\CC^*, C \in H^0(\PP^1, \mcO_{\PP^1}( \delta -2 )) $, if $\delta\geq 2$.}
    \end{cases}
 \end{equation*}
 It follows in particular that if $ \delta = 1 $, then $ \mcF = [s] $ is uniquely determined by $ Z $, in the
 sense that $ \mcF $ is the unique foliation with singular scheme $ Z $.
 \end{theorem}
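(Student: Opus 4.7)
The strategy, already foreshadowed in the discussion preceding the theorem, is to interpret sections $s'$ with $Z_{s'} \supseteq Z$ via the Koszul resolution of $I_Z$ and then use the vanishing from Lemma \ref{Lemma 3} to force them into the image of the ``apply an endomorphism'' map. Starting from the Koszul complex \eqref{KszlZ} and tensoring by $\Theta_{S_\delta}\otimes\mcL^*$, one obtains the short exact sequence \eqref{twistedKszlZ} in which the map $\iota_s\otimes 1 \colon \mathrm{Hom}_{\mcO}(\cts,\cts) \to \cts \otimes \mcL^* \otimes I_Z$ sends an endomorphism $\Phi$ to $\Phi(s)$. The piece of the associated long exact sequence \eqref{LeCtKszlZ} that matters here is
\[
H^0(\cs, \cts \otimes \mcL^* \otimes I_Z)\;\hookrightarrow\;\mathrm{image in }\;H^1(\cs, \cts \otimes \mcE),
\]
and one observes that $H^0(\cs, \cts \otimes \mcL^* \otimes I_Z)$ is exactly the space of global sections of $\cts \otimes \mcL^*$ that vanish on $Z$, i.e.\ the sections $s'$ with $Z \subseteq Z_{s'}$.

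Next, I would invoke Lemma \ref{Lemma 3}, which under the hypotheses on $(d_1, d_2, \delta)$ states that $h^1(S_\delta, \cts \otimes \mcE) = 0$. This immediately forces the map $\iota_s \otimes 1^0$ to be surjective, so every such $s'$ has the form $s' = \Phi(s)$ for some global endomorphism $\Phi$ of $T\cs$. To upgrade this to \emph{invertibility} of $\Phi$, I would use the hypothesis that $s'$ has singular scheme exactly equal to $Z$ (not merely containing it): if $\Phi$ were non-invertible and $s' \neq 0$, then by Corollary \ref{manyfols}(3), $Z_{s'}$ would be one-dimensional, contradicting that $Z$ is a zero-dimensional scheme (isolated singularities). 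Therefore $\Phi$ is invertible, as claimed.

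The explicit description of $\Omega'$ is then an immediate translation: Theorem \ref{endtang} lists the matrix $A$ representing an arbitrary $\Phi$ in each of the three cases $\delta = 0$, $\delta = 1$, $\delta \geq 2$, and Theorem \ref{endtang}(c) tells us how to pass from $\Omega$ to $\Omega'$ by the rule $(A_0',A_1',B_0',B_1') = (A_0,A_1,B_0,B_1)\cdot A$. Carrying out this product using the three forms of $A$ yields exactly the three cases displayed in the theorem; in the case $\delta \geq 2$ one uses in addition the vanishing $X_0 A_0 + X_1 A_1 - \delta Y_1 B_1 = 0$ from \eqref{condit1} to rewrite the correction as a multiple of $X_1\, dX_0 - X_0\, dX_1$. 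Finally, the uniqueness statement for $\delta = 1$ is essentially free: Theorem \ref{endtang} shows that every global endomorphism of $TS_1$ is a scalar multiple of the identity, so $s' = a\,s$ and $[s'] = [s]$.

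I do not foresee a serious obstacle: the substance is entirely packaged into the two auxiliary results, namely the cohomology vanishing of Lemma \ref{Lemma 3} and the classification in Theorem \ref{endtang} together with Corollary \ref{manyfols}. The only delicate point is making sure that the case-by-case numerical hypotheses on $d_1, d_2, \delta$ are sharp enough to guarantee $h^1(\cs, \cts \otimes \mcE) = 0$, but this has already been established in Lemma \ref{Lemma 3}. The remainder of the argument is a clean application of the long exact sequence \eqref{LeCtKszlZ} together with a bookkeeping computation using Theorem \ref{endtang}(c).
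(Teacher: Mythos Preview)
Your proposal is correct and follows essentially the same route as the paper: the paper's own proof simply cites Lemma \ref{Lemma 3} and the Koszul/long-exact-sequence construction preceding the theorem for the existence of $\Phi$, then Corollary \ref{manyfols} for the explicit form of $\Omega'$ and the invertibility of $\Phi$. Your write-up is in fact slightly more explicit than the paper's, spelling out why a non-invertible $\Phi$ is excluded via Corollary \ref{manyfols}(3); one tiny inaccuracy is that the $\delta\geq 2$ formula for $\Omega'$ already comes out of the matrix product in Theorem \ref{endtang}(c) without invoking \eqref{condit1}.
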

\begin{proof}
  The first statement follows  from Lemma \ref{Lemma 3} and the construction described after its proof;
  the second one (the one containing the displayed equation),  from Corollary \ref{manyfols} and,
  the last one, from the equation in the middle of the displayed equality for $\Omega'$.
\end{proof}

\end{document}